\numberwithin{equation}{section}
\numberwithin{figure}{section}
\numberwithin{table}{section}
\def\algbackskip{\hskip-\ALG@thistlm}
\definecolor{lightgray}{gray}{0.9}
\definecolor{bluegreen}{rgb}{0.0, 0.87, 0.87}
\newtheorem{remark}{Remark}
\newtheorem{theorem}{Theorem}
\newcommand*{\bl}[1]{\mathbf{#1}}
\newcommand*{\bldt}[1]{\mathbf{\dot{#1}}}
\newcommand*{\blt}[1]{\mathbf{\tilde{#1}}}
\newcommand*{\bltdt}[1]{\mathbf{\dot{\tilde{#1}}}}
\newcommand*{\blh}[1]{\mathbf{\hat{#1}}}
\newcommand*{\blhdt}[1]{\mathbf{\dot{\hat{#1}}}}
\DeclareMathOperator{\sech}{sech}
\begin{document}
  

\title{Linearly Implicit Global Energy Preserving Reduced-order Models for Cubic Hamiltonian Systems}

\author[$\ast$]{Süleyman Y\i ld\i z}
\affil[$\ast$]{Max Planck Institute for Dynamics of Complex Technical Systems, 39106 Magdeburg, Germany.\authorcr
	\email{yildiz@mpi-magdeburg.mpg.de}, \orcid{0000-0001-7904-605X}
}
  
\author[$\ast\ast$]{Pawan Goyal}
\affil[$\ast\ast$]{Max Planck Institute for Dynamics of Complex Technical Systems, 39106 Magdeburg, Germany.\authorcr
  \email{goyalp@mpi-magdeburg.mpg.de}, \orcid{0000-0003-3072-7780}
}
  
\author[$\dagger\ddagger$]{Peter Benner}
\affil[$\dagger$]{Max Planck Institute for Dynamics of Complex Technical Systems, 39106 Magdeburg, Germany.\authorcr
  \email{benner@mpi-magdeburg.mpg.de}, \orcid{0000-0003-3362-4103}
}
\affil[$\ddagger$]{Otto von Guericke University,  Universit\"atsplatz 2, 39106 Magdeburg, Germany\authorcr
  \email{peter.benner@ovgu.de} 
  \vspace{-0.5cm}
}
  
\shorttitle{Linearly implicit global energy preserving ROMs}
\shortauthor{S. Y\i ld\i z, P. Goyal, P. Benner}
\shortdate{}
  
\keywords{Energy preserving integrator, multi-symplectic PDEs, structure-preserving methods, reduced-order modeling, large-scale models}

  
\abstract{%
This work discusses the model reduction problem for large-scale multi-symplectic PDEs with cubic invariants. 
For this, we present a linearly implicit global energy preserving method to construct reduced-order models. 
This allows to construct reduced-order models in the form of Hamiltonian systems suitable for long-time integration. 
Furthermore,  We prove that the constructed reduced-order models preserve global energy, and the spatially discrete equations also preserve the spatially-discrete local energy conversation law. 
We illustrate the efficiency of the proposed method using three numerical examples, namely a linear wave equation, the Korteweg–de Vries equation, and the Camassa-Holm equation, and present a comparison with the classical POD-Galerkin method. 
}

\novelty{
	\begin{enumerate}
		\item A model reduction problem for large-scale multi-symplectic PDEs with cubic invariants is investigated.
		\item Construction of global energy preserving reduced-order models is proposed. 
		\item Several numerical examples are discussed to support our analysis.
\end{enumerate}
} 
\maketitle

\section{Introduction}\label{sec:intro}

Partial differential equations (PDEs) are widely used to study the dynamical behavior of various real-world phenomena such as flow dynamics, weather dynamics, and chemical reactions. In order to capture fine details of dynamical behavior, we require a high-fidelity spatial discretization of the PDEs. This leads to large-scale dynamical systems, which makes the engineering design process (e.g., simulations, optimization, and control) computationally demanding, even sometimes infeasible, particularly when the system is used for multiple queries during the design process. A remedy to the mentioned obstacle can be provided by \emph{model order reduction} (MOR). MOR delivers a tool to construct low-dimensional reduced-order models (ROMs) that approximately capture the high-fidelity or full-order models (FOMs). As a result,  FOMs can be replaced by ROMs, accelerating numerical simulations; thus, engineering design can be done at a modest cost. 

MOR for linear and nonlinear systems has been studied for several decades and applied to various science and engineering applications. We refer to the recent handbook \cite{morBenGQetal21,morBenGQetal21a,morBenGQetal21b} for overviews of various concepts and algorithms used to construct ROMs, as well as wide variety of application domains. In this work, we focus on the proper orthogonal decomposition (POD) approach---arguably one of the most popular techniques to construct ROMs for nonlinear systems, which is constructed using a basis computed using the singular value decomposition of the snapshot or data matrix. 

In this work, we focus on PDEs that can be written in multi-symplectic form. There are numerous examples that fall into this category, e.g., the Korteweg-de Vries (KdV) equation \cite{eidnes2020, gong14}, the Camassa-Holm equation \cite{cohen08ms}, and wave equations \cite{moore2003}. Trajectories of multi-symplectic PDEs fulfill various conversation laws such as local and global energy conservation, local momentum, and multi-symplectic conservation laws. The conservation of these laws is essential in order to obtain accurate long-time horizon simulations. In the past, several numerical integrators have been proposed that preserve properties such as symplectic or multi-symplectic structures, which are often referred to as multi-symplectic integrators. The box/Preissmann scheme \cite{leimkuhler04}, the Euler-box scheme \cite{moore03back},  the Fourier pseudo-spectral collocation scheme \cite{bridges01}, the wavelet collocation method \cite{zhu10}, the average vector field (AVF) method \cite{gong14} and linearly implicit integrators \cite{cai2022, eidnes2020}, are a few examples of such integrators that preserve energy and momentum. A direct application of the POD method often does not yield ROMs that also conserve these physical quantities. As a result, it is commonly observed that ROMs are not stable, and their long-term horizon simulations are often failed. Therefore, we aim to obtain ROMs, preserving the desired physical quantities, and in this paper, we focus on obtaining ROMs that preserve global energy. We also like to highlight some prior work in this direction. The stability of ROMs has been studied in terms of the Lagrangian structure preservation of FOMs in \cite{lall03, carlberg15}. For port-Hamiltonian systems, the passivity and stability of the ROMs are preserved by constructing reduced-order port-Hamiltonian systems in \cite{chaturantabut16, polyuga10}. For Hamiltonian systems, the symplectic structure-preserving ROM is proposed by symplectic transformations in \cite{peng2016}, where it is shown that the ROM inherits the long-term stability of the full-order Hamiltonian models.

In this paper, we study a MOR technique for Hamiltonian PDEs to construct ROMs that preserve the multi-symplectic structure. We construct a FOM by discretizing the PDE using its multi-symplectic form utilizing a finite-difference technique and use Kahan's method \cite{celledoni2012} to integrate in time in order to preserve global energy. We then discuss a structure-preserving technique to construct ROMs using POD Galerkin projection. We show that if the obtained ROM is integrated using the same Kahan's method, it also preserves the physical quantities such as energy. Several numerical experiments also support our analysis. 
Recently, the work  \cite{uzunca2023global} goes in a similar direction, aiming to construct a global energy preserving ROM for multisymplectic PDEs. It utilizes  the global energy conservation scheme inspired by the AVF method in \cite{gong14}. An advantage of this approach is that the global energy is conversed in the ROM, but a disadvantage of the AVF method that it is an implicit method; hence, it is computationally demanding. In this study, we consider the linearly implicit global energy conservation scheme \cite{eidnes2020} to achieve the same goals but more efficiently.

The remainder of the paper is organized as follows. In Section~\ref{sec:FOM}, we provide an overview of the multi-symplectic FOM in space and time, and the energy-preserving Kahan's method. In Section~\ref{sec:ROM}, we present the construction of the multi-symplectic structure-preserving ROMs and show that ROMs converse energy if integrated using Kahan's method. Section~\ref{sec:num} presents several numerical examples and its comparison with the standard POD approach. In Section~\ref{sec:conc}, we present our conclusions and future research directions.

\section{Multi-symplectic Full-order Model} \label{sec:FOM}

Let us consider a quadratic ODE system of the form:
\begin{equation}
	\dot{y} = Q(y)+By+c, \quad y \in \mathbb{R}^N,
	\label{original_quadratic_form}
\end{equation} 
where $B\in \mathbb{R}^{N\times N}$ is a constant symmetric matrix, $ Q (y) \in \mathbb{R}^N$ is a quadratic form, and $c \in \mathbb{R}^{N}$ is a constant vector. As listed in the introduction, there exist several multi-symplectic structures aiming at preserving desired properties such as energy and/or momentum. In this work, we focus on Kahan's methods to integrate the system \eqref{original_quadratic_form} that preserves the global energy of the system. It provides the vector $y$ at the next time step by solving the following nonlinear system of equations:
\begin{equation}\label{eqn:Kahan}
	\dfrac{y^{n+1}-y^n}{\Delta t} =\bar{ Q }(y^n,y^{n+1})+B\left(\frac {y^{n}+y^{n+1}}{2}\right) +c,
\end{equation}
where $y^{n}$ and $y^{n+1}$ are the vectors at time step $n$ and $n+1$, respectively, and the  symmetric bilinear form $ \bar{ Q }(\cdot,\cdot) $ corresponds to the polarization of the quadratic form $ Q $ \cite{celledoni2012}, which is given by
\begin{align*}
	\bar{ Q }(y^{n},y^{n+1}) = \dfrac{1}{2}\left( Q (y^{n}+y^{n+1})- Q (y^{n})- Q \left(y^{n+1}\right)\right).
\end{align*}
Next, we consider a one-dimensional multi-symplectic PDEs of the following form:
\begin{equation}\label{eqn:Ms_PDEs}
	K z_t+L z_x=\nabla_z S(z),	\quad (x,t)\in \mathbb{R}\times \mathbb{R},
\end{equation}
where $ K,  L \in \mathbb{R}^{d\times d} $ are skew-symmetric constant matrices; $ z(x, t)=[z_1(x, t),\ldots, z_d(x, t)]^\top \in\mathbb{R}^{d} $ is the vector of state variables, and $ S(z) : \mathbb{R}^{d} \rightarrow \mathbb{R}$ is a smooth function.
Multi-symplectic PDEs \eqref{eqn:Ms_PDEs} preserve local conservation laws \cite{moore2003se}, which are as follows:
\begin{itemize}
	\item the multi-symplectic conservation law:
	\begin{equation*}
	\partial_t \omega+\partial_x\kappa=0, \quad\omega=dz\wedge K_+dz,\quad\kappa=dz\wedge L_+dz,
	\end{equation*}  
	\item the local energy conservation law:
	\begin{equation*}
	E_t +F_x=0, \quad E=S(z)+z_x^TL_+z,\quad F=-z_t^TL_+z, \quad \text{and}
	\end{equation*}    
	\item  the local momentum conservation law:
	\begin{equation*}
	I_t +G_x=0, \quad G=S(z)+z_t^T K_+z,\quad I=-z_x^T K_+z,
	\end{equation*}    
\end{itemize}   
where $K_+$ and $L_+$ satisfy
$$K=K_+-K_+^\top ,   \quad    L=L_+-L_+^\top ,$$ 
and $ \wedge $ denotes the wedge product.

Let us introduce some notations such as the spatial domain $ \Gamma=\left[a,b\right] $, spatial node $ x_j=a+h(j-1) $, temporal node $ t_n=n\Delta t $, $ j=1,\ldots,N $, $n=0,1,\ldots $, where $ h=\tfrac{b-a}{N} $ is the spatial step size and $ \Delta t $ is the temporal step size. Furthermore, we denote the approximation of the function $ v(x,t) $ at the node $ (x_j,t_n) $ as $ v_j^n $.
Furthermore, for temporal and spatial discretizations, let us define the following difference and averaging operators:
\begin{align*}
	\delta_{t} v_j^n &:= \frac{v_j^{n+1} -v_j^n}{\Delta t}, &\delta_{t}^{1/2} v_j^n &:= \frac{v_j^{n+1} -v_j^{n-1}}{2\Delta t}, & \mu_{t} v_j^n &:= \frac{v_j^{n+1} +v_j^n}{2}, \\
	\delta_{x} v_j^n &:= \frac{v_{j+1}^n -v_{j}^n}{\Delta x}, &  \delta_{x}^{1/2} v_j^n &:= \frac{v_{j+1}^n -v_{j-1}^n}{2\Delta x}, & \mu_{x}v_j^n &:= \frac{v_{j+1}^{n} +v_j^n}{2}.
\end{align*}
The above operators commute \cite{eidnes2020}, i.e.,
$$\delta_{t}^{1/2}\delta_{x}v_j^n=\delta_{x}\delta_{t}^{1/2}v_j^n,\quad \delta_{t}\mu_{x}v_j^n=\mu_{x}\delta_{t}v_j^n,\quad \mu_{t}\delta_{x}^{1/2}v_j^n=\delta_{x}^{1/2}\mu_{t}v_j^n.$$
Moreover, the difference and averaging operators satisfy the discrete Leibniz rule \cite{eidnes2020},
$$\delta_{t}(uv)_j^n=(\varepsilon u_j^{n+1}+(1-\varepsilon )u_j^n)\delta_{t}v_j^n+\delta_{t}u_j^n((1-\varepsilon) v_j^{n+1}+ \varepsilon v_j^{n}),\quad 0\le\varepsilon\le 1.$$
For instance,
\begin{align*}
	\delta_{t}(uv)_j^n&=u_j^{n}\delta_{t}v_j^n+\delta_{t}u_j^nv_j^{n+1},\qquad\text{for}~~ \varepsilon=0,\\
	\delta_{t}(uv)_j^n&=\mu_{t}u_j^{n}\delta_{t}v_j^n+\delta_{t}u_j^n\mu_{t}v_j^{n},\quad\text{for}~~ \varepsilon=\frac{1}{2},\\
	\delta_{t}(uv)_j^n&=u_j^{n+1}\delta_{t}v_j^n+\delta_{t}u_j^nv_j^{n},\qquad\text{for}~~ \varepsilon=1.
\end{align*}
Let us define the spatially discrete state vector as
$$ \bl{z}(t)=[z_{1,1}(t),\ldots,z_{1,N}(t),z_{2,1}(t),\ldots,z_{2,N}(t),z_{d,1}(t),\ldots,z_{d,N}(t)]^\top ,$$
where $ z_{d,j}(t)= z_d(x_{j},t) $, for $ j=1,2,\ldots,N $. Moreover, let us define the vector $  \bl{z}_{m}(t) $, containing the $m^{\texttt{th}}$ node of each state as follows:
\begin{equation}\label{eqn:element}
	\bl{z}_{m}(t)=[{z}_{1,m}(t),{z}_{2,m}(t),\ldots,{z}_{d,m}(t)]^\top .
\end{equation}
The notation in \eqref{eqn:element} and the discrete Leibniz rules play an essential role in the analysis of the conservation laws.
We discretize the partial derivative $\partial_x$ with the central difference operator $\delta_{x}^{1/2}$ and get the spatially discrete FOM of \eqref{eqn:Ms_PDEs} as follows:
\begin{equation}\label{eqn:semi-discrete-element}
	K\partial_t \bl z_m+ L\delta_{x}^{1/2} \bl z_m =\nabla_z S(\bl z_m), \quad m=1,\ldots,N.
\end{equation}

Following the study \cite{cai2022}, we assume that the scalar-valued function $ S(z) $ has the non-homogeneous form
\begin{align}\label{eqn:energy}
	S(z)=z^\top Q(z)z+z^\top Bz+c^\top z+d,
\end{align}
where $6\cdot Q(z)$ is a symmetric matrix that consists of homogeneous linear polynomials, which corresponds to the linear part of the Hessian $\nabla^2 S(z)$; similarly, $2\cdot B$ is the constant symmetric matrix corresponding to the constant part of the Hessian $\nabla^2 S(y)$, and $c, d$ are a constant vector and scalar, respectively.

Using the anti-symmetric matrix $ D_x $, the spatially discrete equations \eqref{eqn:semi-discrete-element} can be written in a compact form:
\begin{equation}\label{eqn:Ms-semi-discrete}
	\bl K\bldt z+ \bl L\bl{D}_x\bl z=\nabla_{\bl z}\bl S(\bl z),
\end{equation}
where $ \bl S(\bl z) : \mathbb{R}^{ N\cdot d} \rightarrow \mathbb{R} $, $ \bl{K}=(K \otimes I_N)\in \mathbb{R}^{N\cdot d\times N\cdot d}$, $ \bl{L}=( L \otimes I_N)\in \mathbb{R}^{N\cdot d\times N\cdot d}$,  $ \bl{D}_x=( I_d\otimes D_x)\in \mathbb{R}^{N\cdot d\times N\cdot d},$  $ I_N\in\mathbb{R}^{N\times N},I_d\in\mathbb{R}^{d\times d} $ are identity matrices, and  $ \otimes $ denotes the Kronecker product.

Then the fully discrete globally energy preserving model can be written by utilizing the polarisation of $ S(z) $ as follows \cite{eidnes2020}:
\begin{equation}\label{eqn:LIGEP-element}
	K\delta_t \bl z_m^n+L\delta_{x}^{1/2}\mu_t\bl z_m^{n} =3\frac {\partial \bar{S}}{\partial x}\bigg\rvert_{(\bl z_m^n,\bl z_m^{n+1})},
\end{equation} 
where the polarized function $ \bar{S}(x,y,z) $ has the following form:
\begin{align}\label{eqn:polarisedH}
	\bar{S}(x,y,z)  = x^\top Q(y)z+\frac {1}{3} (x^\top  B y + y^\top  B z + z^\top  B x)+ \frac{1}{3} c^\top  (x+y+z) + d.
\end{align} 
The polarized function $\bar{S}(x,y,z)$ in \eqref{eqn:polarisedH} satisfies the property $\bar{S}(x,x,x)=S(x)$, and it is symmetric with respect to $ x,y $ and $ z $.
Moreover, the partial derivative of the polarized function $\bar{S}(x,y,z)$ with respect to $ x $ is 
$$\frac{\partial \bar{S}(x,y,z)}{\partial x}=Q(y)z+\frac {B(y+z)}{3}+\frac {c}{3}.$$
Notice that the linearly implicit global energy preserving (LIGEP) method is proposed for a general skew-symmetric differential matrix $ {D}_x $ in \cite{eidnes2020}. Nevertheless, here we will focus only on the scenario where $ {D}_x $ is obtained from the central difference operator.

Finally, the fully discrete equations \eqref{eqn:LIGEP-element} can be written in compact form as follows:
\begin{equation}\label{eqn:LIGEP-FOM}
	\bl K \delta_{t} \bl z^{n}+ \bl L\mu_{t}\bl{D}_x\bl z^{n}=3\frac {\partial \bar{\bl  S}}{\partial\bl x}\bigg\rvert_{(\bl z^{n},\bl z^{n+1})},
\end{equation}  
where $ \bl z^{n}= \bl z(t_n) $ and $\bar{\bl S}(\cdot,\cdot,\cdot):\mathbb{R}^{ N\cdot d}\times\mathbb{R}^{ N\cdot d}\times\mathbb{R}^{ N\cdot d} \rightarrow \mathbb{R}$ is a symmetric $3$-tensor satisfying $\bar{\bl S}(\bl x,\bl x,\bl x) = \bl S(\bl x)$.

\section{Structure-preserving Reduced-order Model}\label{sec:ROM}
Consider the discrete state vector $  \bl z(t_i) \in \mathbb{R}^{d\cdot N} $ for $ i=1,2,\ldots,N_t $. Let us define the following snapshot matrix
\begin{equation}\label{eqn:data_mat}
	\bl S=\left[\bl z(t_1),\ldots,\bl z(t_{N_t})\right]\in \mathbf{R}^{d\cdot N\times N_t}.
\end{equation}
Here, we are interested in the following optimization problem:
\begin{equation}\label{eqn:opt}
	\min_{\substack{ \bl V }} \| \bl S - \bl V\bl V^\top \bl S \|_F, \quad \text{subject to}\quad \bl V^\top \bl K\bl V=\bl K_r,  \ \bl V^\top \bl L\bl V=\bl L_r,
\end{equation}
where $ \|\cdot\|_{F} $ denotes the Frobenius norm; $ \bl{K}_r=(K\otimes I_r)\in \mathbb{R}^{r\cdot d\times r\cdot d}$, $ \bl{L}_r=( L \otimes I_r)\in \mathbb{R}^{r\cdot d\times r\cdot d}$ with $ I_r \in \mathbb{R}^{r\times r} $ being the identity matrix.

One possible solution of the optimization problem \eqref{eqn:opt} may be derived by using an idea similar to the \emph{cotangent lift} in the symplectic MOR \cite{peng2016}. To demonstrate the procedure, let us define global snaphot matrix of the form
\begin{equation}\label{eqn:global_snap}
	\bl Z=[\bl Z_1,\ldots, \bl Z_d]\in \mathbb{R}^{N\times d\cdot N_t}
\end{equation}
where $ N_t $ is the number of time steps, and
$$ \bl Z_i=\left[[z_{i,1}(t_1),\ldots,z_{i,N}(t_1)]^\top,\ldots, [z_{i,1}(t_{N_t}),\ldots,z_{i,N}(t_{N_t})]^\top \right]\in \mathbb{R}^{N\times N_t}, \quad i=1,\ldots,d. $$

Assuming $ V\in \mathbb{R}^{N\times r} $ is the POD basis of the global snapshot matrix \eqref{eqn:global_snap}, we can construct the basis as $ \bl{V}=( I_d\otimes V)\in \mathbb{R}^{d\cdot N\times d\cdot r}  $, where $ I_d\in \mathbb{R}^{d\times d} $ is the identity matrix. 
Since the matrix $\bl V$ is an orthogonal matrix, i.e., $ \bl{V}^\top \bl{V}=I_{d\cdot r } $, we can easily show that the following properties hold:
\begin{align}\label{eq:reduced_relation}
	\bl V^\top \bl K\bl V &= \bl K_r,  & \bl V^\top \bl L\bl V&= \bl L_r, &
	\bl{V}^\top\bl{K}&= \bl{K}_r\bl{V}^\top, & \bl{V}^\top\bl{L}&=  \bl{L}_r\bl{V}^\top.
\end{align}
Using the definition in \eqref{eqn:data_mat}, the snapshot matrix $ \bl S $ can be written as $ \bl S=\left[\bl Z_1^\top,\ldots,\bl Z_d^\top\right]^\top $. Now, let us consider the objective function of \eqref{eqn:opt},
\begin{equation}\label{eqn:opt_basis}
	\begin{split}
		\| \bl S - \bl V\bl V^\top \bl S \|_F&=\| \bl S - \left( I_d\otimes \left(VV^\top\right)\right) \bl S \|_F\\
		&=\left\| \left[\left(I_N-VV^\top\right)\bl Z_1,\ldots,\left(I_N-VV^\top\right)\bl Z_d\right] \right\|_F\\
		&=\left\| \left(I_N-VV^\top\right)\left[\bl Z_1,\ldots,\bl Z_d\right] \right\|_F\\
		&=\left\| \left(I_N-VV^\top\right)\bl Z \right\|_F.
	\end{split}
\end{equation}
Equation \eqref{eqn:opt_basis} shows that $ \bl V $ is a minimizer of the optimization problem \eqref{eqn:opt} . Hence, by the approximation $ \bl z\approx \blh{z}=\bl{V}\blt z$, we obtain the following:
\begin{equation}\label{FOM-approx}
	\bl{K} \bl{V}\bltdt z+\bl{L}\bl{D}_x\bl{V}\blt z=\nabla_{\bl z}\bl S( \bl{V}\blt z) + \cR(\blt z),
\end{equation}
where $\cR(\blt z)$ is the residual. Projecting \eqref{FOM-approx} from the left-hand side using $V^\top$ and assuming $\cR(\blt z)$ is orthonormal to $V$, the ROM can be written as follows:
\begin{equation}\label{eqn:Ms-proj}
	\bl{V}^\top\bl{K} \bl{V}\bltdt z+\bl{V}^\top\bl{L}\bl{D}_x\bl{V}\blt z=\bl{V}^\top\nabla_{\bl z}\bl S( \bl{V}\blt z).	
\end{equation} 
Using the relations in \eqref{eq:reduced_relation}, we have
\begin{equation}
	\bl{K}_r\bltdt z+\bl{L}_r\bl{V}^\top\bl{D}_x\bl{V}\blt z=\bl{V}^\top\nabla_{\bl z}\bl S(\bl{V}\blt z),
\end{equation} 
which can be written as follow:
\begin{equation}\label{MS-sd-rom}
	\bl{K}_r\bltdt z+\bl{L}_r\blt{D}_x\blt z=\bl{V}^\top\nabla_{\bl z}\bl S(\bl{V}\blt z),
\end{equation} 
where $ \blt{D}_x= \bl V^\top\bl{D}_x \bl V $. Notice that the semi-discrete ROM \eqref{MS-sd-rom} has a similar multi-symplectic structure as in \eqref{eqn:Ms_PDEs}. It can be shown that $ \bl{K}_r $ and $ \bl{L}_r $ are skew-symmetric.

\begin{remark}
Since in this study we focus only on cubic Hamiltonian systems, the resulting Hamiltonian PDEs possess quadratic nonlinearities. Therefore, the online cost of the LIGEP-ROM \eqref{MS-sd-rom} can be further reduced by using tensor algebra. Hence, in the online stage, the computational cost of the global energy preserving ROM depends only on the reduced dimension $r$. However, in the offline stage, the computation of the basis becomes more expensive compared to the Galerkin POD model due to the amount of $d$ auxiliary snapshots needed in \eqref{eqn:global_snap}. 
\end{remark}

In the following, we do a similar analysis as in \cite[Proposition 3.8]{moore2003} to show that the spatially discrete ROM \eqref{eqn:Ms-proj} satisfies the semi-discrete energy conservation law.
\begin{theorem}
	The spatially-discrete equation \eqref{eqn:Ms-proj} yields a semi-discrete energy conservation law
	\begin{equation*}
		\partial_tE_m+\delta_{x}(F_{m})=0,
	\end{equation*}
	with
	\begin{align*}
		E_m&=S( \blh{z}_m)-\frac{1}{2}\langle  \blh{z}_{m},L\delta_x^{1/2}\blh z_m\rangle\\
		F_{m}&=\frac{1}{4}\left[\langle \blh{z}_{m}, L \blhdt{z}_{m-1}\rangle+\langle \blh{z}_{m-1}, L \blhdt{z}_{m}\rangle\right].
	\end{align*}
\end{theorem}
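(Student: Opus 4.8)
The plan is to establish the balance node by node, mirroring the continuous derivation of the local law $E_t+F_x=0$ and the finite-difference argument of \cite[Proposition 3.8]{moore2003}, so that no summation over $m$ is ever carried out and a genuinely \emph{local} identity is produced. Concretely, I would first rewrite \eqref{eqn:Ms-proj} as an equation of multi-symplectic form for the lifted state $\blh z=\bl V\blt z$, read off its node-$m$ slice, and then differentiate $E_m$ in time and match it against $\delta_x F_m$ using only the skew-symmetry of $K$ and $L$ and the locality of the difference operators.

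First I would reduce the projected ROM to a per-node equation. Left-multiplying \eqref{eqn:Ms-proj} by $\bl V$ and using \eqref{eq:reduced_relation} in the form $\bl K\bl V=\bl V\bl K_r$ and $\bl L\bl V=\bl V\bl L_r$, I obtain the lifted full-space equation $\bl K\blhdt{z}+\bl L\bl V\bl V^\top\bl D_x\blh z=\bl V\bl V^\top\nabla_{\bl z}\bl S(\blh z)$. Exploiting the Kronecker structure $\bl V=I_d\otimes V$, $\bl K=K\otimes I_N$, $\bl L=L\otimes I_N$, $\bl D_x=I_d\otimes D_x$, I would then extract the node-$m$ slice and bring it to the same form as the FOM \eqref{eqn:semi-discrete-element}, namely
\[
  K\blhdt{z}_m+L\delta_x^{1/2}\blh{z}_m=\nabla_z S(\blh{z}_m),\qquad m=1,\dots,N,
\]
where $\blh{z}_m$ collects the $m$-th spatial node of every state component. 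I expect this per-node reduction to be the main obstacle: one must verify that the projection $\bl V\bl V^\top$ can be discarded so that the spatial term collapses to the \emph{local} central difference $\delta_x^{1/2}\blh{z}_m$ and the right-hand side to the \emph{unprojected} nodal gradient $\nabla_z S(\blh{z}_m)$. This is precisely the place where the block structure of $\bl V=I_d\otimes V$ and the antisymmetry of $D_x$ must be used with care, and it is the step that distinguishes the stated local law from a merely global (summed) energy balance.

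With the per-node equation available, I would take the discrete inner product with $\blhdt{z}_m$. The term $\langle\blhdt{z}_m,K\blhdt{z}_m\rangle$ vanishes by skew-symmetry of $K$, and the chain rule gives $\langle\blhdt{z}_m,\nabla_z S(\blh{z}_m)\rangle=\partial_t S(\blh{z}_m)$, whence
\[
  \langle\blhdt{z}_m,\,L\delta_x^{1/2}\blh{z}_m\rangle=\partial_t S(\blh{z}_m).
\]
Differentiating $E_m=S(\blh{z}_m)-\tfrac12\langle\blh{z}_m,L\delta_x^{1/2}\blh{z}_m\rangle$ in time, applying the product rule to the second summand and using that $\partial_t$ commutes with $\delta_x^{1/2}$, the two copies of $\partial_t S(\blh{z}_m)$ combine to leave
\[
  \partial_t E_m=\tfrac12\langle\blhdt{z}_m,L\delta_x^{1/2}\blh{z}_m\rangle-\tfrac12\langle\blh{z}_m,L\delta_x^{1/2}\blhdt{z}_m\rangle.
\]

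Finally I would expand $\delta_x F_m=(F_{m+1}-F_m)/\Delta x$ directly from the definition of $F_m$, writing $\delta_x^{1/2}\blh{z}_m=(\blh{z}_{m+1}-\blh{z}_{m-1})/(2\Delta x)$, and add it to $\partial_t E_m$. Rewriting each contribution through the skew-symmetry identity $a^\top L b=-b^\top L a$, all eight nearest-neighbour pairings of the types $\blh{z}_{m+1}^\top L\blhdt{z}_m$, $\blh{z}_{m-1}^\top L\blhdt{z}_m$, $\blh{z}_m^\top L\blhdt{z}_{m+1}$ and $\blh{z}_m^\top L\blhdt{z}_{m-1}$ cancel in pairs, giving $\partial_t E_m+\delta_x F_m=0$. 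This concluding step is purely algebraic once the per-node equation and the skew-symmetry of $L$ are in hand, so the only genuine difficulty lies in justifying the local per-node reduction of the first paragraph.
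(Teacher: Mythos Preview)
Your route diverges from the paper's at the very first step, and the divergence is exactly the obstacle you flag. You propose to left-multiply \eqref{eqn:Ms-proj} by $\bl V$ and then read off a per-node vector equation; but the lifted system is $\bl K\blhdt z+\bl L\,(\bl V\bl V^\top)\bl D_x\blh z=(\bl V\bl V^\top)\nabla_{\bl z}\bl S(\blh z)$, and neither the block structure $\bl V=I_d\otimes V$ nor the skew-symmetry of $D_x$ lets you drop the rank-deficient projector $\bl V\bl V^\top=I_d\otimes VV^\top$. Because $VV^\top D_x$ is a dense $N\times N$ operator, its $m$-th row couples \emph{all} spatial nodes, so the local identity $K\blhdt z_m+L\delta_x^{1/2}\blh z_m=\nabla_z S(\blh z_m)$ simply does not follow from the ROM. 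Your per-node reduction therefore fails precisely where you anticipated, and the ``block structure plus antisymmetry'' you invoke is not enough to rescue it.

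The paper sidesteps this by a different ordering: instead of lifting the equation, it takes the inner product of \eqref{eqn:Ms-proj} with the \emph{reduced} velocity $\bltdt z$. Each factor $\bl V^\top$ then migrates across the pairing to produce $\blhdt z=\bl V\bltdt z$ on the other side, so no projector ever appears, and the resulting scalar identity $\langle\blhdt z,\bl K\blhdt z\rangle+\langle\blhdt z,\bl L\bl D_x\blh z\rangle=\langle\blhdt z,\nabla_{\bl z}\bl S(\blh z)\rangle$ splits cleanly, via the Kronecker structure, into a sum over $m$ of exactly the nodewise terms you wrote down. From that point on your plan and the paper's coincide: the $K$-term vanishes by skew-symmetry, the chain rule gives $\partial_t S(\blh z_m)$, and the Leibniz-type rearrangement exposes the flux $\delta_x F_m$. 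It is worth noting, however, that the paper's own passage from the single scalar equation to the $N$ separate equalities ``$m=1,\ldots,N$'' is itself asserted rather than argued; what the inner-product device literally delivers is the summed identity, so the strictly \emph{local} (as opposed to globally summed) form of the law that you were worried about is not rigorously isolated in the paper either.
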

\begin{proof}
	Consider the semi-discrete equation \eqref{eqn:Ms-proj}
	$$\bl V^\top\bl K\bl V\bltdt z+ \bl{V}^\top\bl{L}\bl{D}_x\bl{V}\blt z=\bl{V}^\top\nabla_{\bl z}\bl S(\blh z).$$
	By taking the inner product of both sides with $ \bltdt z$, we  obtain
	$$\langle \bltdt{z},\bl V^\top\bl K\bl V\bltdt z\rangle+ \langle \bltdt{z},\bl V^\top\bl L\bl{D}_x\blh z\rangle=\langle \bltdt{z},\bl{V}^\top\nabla_{\bl z}\bl S(\blh z)\rangle,	$$
	and by using the notation in \eqref{eqn:element}, we can write the above equation as
	\begin{equation*}
		\langle \blhdt{z}_{m},K\blhdt z_{m}\rangle+ \langle \blhdt{z}_{m},L\delta_x^{1/2}\blh z_m\rangle=\langle \blhdt{z}_{m},\nabla_zS(\blh z_m)\rangle, \qquad  m=1,\ldots, N.
	\end{equation*}
	From the skew-symmetry property of the matrix $K$, we have $ \langle \blhdt{z}_{m},K\blhdt z_{m}\rangle=0 $. Additionally, using the identity $ \partial_t S(\blh z_m)=\langle \blhdt{z}_{m},\nabla_zS(\blh z_m)\rangle $, we have
	\begin{align*}
		\partial_t S(\blh z_m)&= \langle \blhdt{z}_{m},L\delta_x^{1/2}\blh z_m\rangle\\
		&=\frac{1}{2}\left( \partial_t\langle \blh{z}_{m},L\delta_x^{1/2}\blh z_m\rangle+\langle \blhdt{z}_{m},L\delta_x^{1/2}\blh z_m\rangle- \langle \blh{z}_{m},L\delta_x^{1/2}\blhdt z_m\rangle\right).
	\end{align*}
	Moreover, we have the following relation:
	\begin{align*}
		&\hspace{-2cm}\langle \blhdt{z}_{m},L\delta_x^{1/2}\blh z_m\rangle- \langle \blh{z}_{m},L\delta_x^{1/2}\blhdt z_m\rangle\\
		&=\frac{1}{2h}\left(\langle \blhdt{z}_{m},L(\blh z_{m+1}-\blh z_{m-1})\rangle- \langle \blh{z}_{m},L(\blhdt z_{m+1}-\blhdt z_{m-1})\rangle \right)\\
		&=\frac{1}{2h}\left(-\langle \blh{z}_{m+1},L\blhdt{z}_{m}\rangle+\langle \blh{z}_{m-1},L\blhdt{z}_{m}\rangle-\langle \blh{z}_{m},L\blhdt{z}_{m+1}\rangle+\langle \blh{z}_{m},L\blhdt{z}_{m-1}\rangle \right)\\
		&=-\frac{1}{2}\delta_{x}\left(\langle \blh{z}_{m},L\blhdt{z}_{m-1}\rangle+\langle \blh{z}_{m-1},L\blhdt{z}_{m}\rangle\right),
	\end{align*} 
which concludes the proof. 
\end{proof}
Next, we note the fully discrete equations by employing the LIGEP method \cite{eidnes2020} to \eqref{MS-sd-rom}, which can then be written as 
\begin{equation}\label{eqn:LIGEP-ROM}
	\bl V^\top\bl K\delta_{t}\blh z^{n}+ \bl V^\top\bl L\mu_{t}\bl{D}_x\blh z^{n}=3\bl V^\top\frac {\partial \bar{\bl S}}{\partial \bl x}\bigg\rvert_{(\blh z^{n},\blh z^{n+1})}.
\end{equation}  

In the following theorem, we use a some similar analysis with \cite[Theorem 4.5]{eidnes2020} to show that our scheme satisfies global energy conversation law.

\begin{theorem}\label{th:globalenergylaw}
	For periodic boundary conditions $z(x+P,t)=z(x,t)$, the scheme \eqref{eqn:LIGEP-ROM} satisfies the discrete global energy conservation law
	\begin{equation}
		\bar{\mathcal{E}}_r^{n+1}= \bar{\mathcal{E}}_r^{n}, \quad \bar{\mathcal{E}}_r^{n}:=\Delta x\sum_{m=1}^{N} \bar{E}_{m}^n, \quad \Delta x = P/N,
		\label{eq:polglobenergy}
	\end{equation}
	where 
	$$	\bar{E}_m^n = \bar{S}(\blh z_m^n,\blh z_m^n,\blh z_m^{n+1})+ \frac{1}{3}\big((\delta_x^{1/2}\blh z_m^{n})^\top L_+\blh z_m^n+(\delta_x^{1/2} \blh z_m^{n})^\top L_+\blh z_m^{n+1}+(\delta_x^{1/2} \blh z_m^{n+1})^\top L_+\blh z_m^n\big).\\$$
\end{theorem}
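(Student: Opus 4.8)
The plan is to reduce the statement to the energy balance already available for the full-order scheme by testing the projected equation \eqref{eqn:LIGEP-ROM} against the time difference of the reduced state. Concretely, take the inner product of \eqref{eqn:LIGEP-ROM} with $\delta_t\blt z^n$ and push every factor $\bl V^\top$ onto the test vector, using that $\bl V$ is constant so that $\bl V\delta_t\blt z^n=\delta_t(\bl V\blt z^n)=\delta_t\blh z^n$ and $\langle \delta_t\blt z^n,\bl V^\top X\rangle=\langle\delta_t\blh z^n,X\rangle$. Together with the relations \eqref{eq:reduced_relation} this yields
$$\langle\delta_t\blh z^n,\bl K\delta_t\blh z^n\rangle+\langle\delta_t\blh z^n,\bl L\mu_t\bl D_x\blh z^n\rangle=3\langle\delta_t\blh z^n,\tfrac{\partial\bar{\bl S}}{\partial\bl x}\big\rvert_{(\blh z^n,\blh z^{n+1})}\rangle,$$
which is \emph{exactly} the energy balance that \eqref{eqn:LIGEP-FOM} produces, with $\bl z^n$ replaced by $\blh z^n$; note that $\bl D_x$ here is still the full central-difference operator acting on the reconstructed state, and that only the projected equation (not the stronger FOM equation) is needed, since the residual is orthogonal to $\mathrm{range}(\bl V)$. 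From here I would simply repeat the argument of \cite[Theorem 4.5]{eidnes2020} verbatim with $\bl z^n\rightsquigarrow\blh z^n$.

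Carrying out that argument, the steps in order are: (i) $\langle\delta_t\blh z^n,\bl K\delta_t\blh z^n\rangle=0$ by skew-symmetry of $\bl K=K\otimes I_N$; (ii) using $\bl L\bl D_x=L\otimes D_x$, the commutation $\mu_t\delta_x^{1/2}=\delta_x^{1/2}\mu_t$, and the node-vector notation \eqref{eqn:element}, rewrite the surviving terms as sums over the spatial nodes, $\sum_{m}\langle\delta_t\blh z_m^n,L\delta_x^{1/2}\mu_t\blh z_m^n\rangle=3\sum_m\langle\delta_t\blh z_m^n,\tfrac{\partial\bar S}{\partial x}\big\rvert_{(\blh z_m^n,\blh z_m^{n+1})}\rangle$; (iii) on the right-hand side, use $\bar S(x,x,x)=S(x)$, the total symmetry of $\bar S$, and the explicit formula $\tfrac{\partial\bar S}{\partial x}(x,y,z)=Q(y)z+\tfrac13B(y+z)+\tfrac13c$, together with the discrete Leibniz rule, to reorganize the mixed $\blh z_m^n$–$\blh z_m^{n+1}$ terms so that — after summation in $m$ and combination with the $L$-term — they reproduce $\delta_t$ of the polarised density $\bar E_m^n$ plus a purely spatial flux $\delta_x(\bar F_m^n)$; (iv) on the left-hand side, write $L=L_+-L_+^\top$ and apply discrete summation by parts under the periodic boundary condition $z(x+P,t)=z(x,t)$, so that $\sum_m\delta_x(\bar F_m^n)=0$. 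Collecting everything gives $\Delta x\sum_m(\bar E_m^{n+1}-\bar E_m^n)=0$, i.e.\ \eqref{eq:polglobenergy}.

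The main obstacle is step (iii), the cubic term. Kahan's method does not preserve the naive discrete energy $\sum_m S(\blh z_m^n)$, so the bookkeeping must produce precisely the two-time-level \emph{polarised} density $\bar E_m^n=\bar S(\blh z_m^n,\blh z_m^n,\blh z_m^{n+1})$ plus the polarised $L_+$-terms, for which the cross terms between consecutive time levels cancel after summation in $m$. Verifying that this cancellation is exact — that what remains, once $\delta_t(\bar E_m^n)$ and the telescoping flux have been peeled off, is identically zero — is the only genuinely nontrivial computation; it rests on the total symmetry of $\bar{\bl S}$ together with the operator-commutation and discrete Leibniz/summation-by-parts identities collected in Section~\ref{sec:FOM}. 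A final remark I would add is that because the node-wise reduction in step (ii) is identical to the one used in the proof of the preceding theorem, no property of the reduced difference matrix $\blt D_x$ is ever needed: the whole argument lives at the level of $\blh z^n=\bl V\blt z^n$ and the full central-difference operator $\bl D_x$.
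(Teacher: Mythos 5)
Your overall strategy coincides with the paper's: test the projected scheme against the time difference of the reduced coordinates, push $\bl V^\top$ onto the test vector so that everything is expressed in terms of $\blh z^n=\bl V\blt z^n$ and the full-order operators, and then run the full-order energy argument unchanged. Your closing observation that no property of $\blt D_x$ is needed and that the whole computation lives at the level of $\blh z$ and $\bl D_x$ is exactly how the paper proceeds.

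However, there is a concrete gap in the execution you describe. You take a \emph{single} inner product, with $\delta_t\blt z^n$, and then assert in step (iii) that Leibniz-rule bookkeeping on the resulting identity produces $\delta_t\bar E_m^n$ plus a telescoping flux. That cannot work: the quantity
\begin{equation*}
\delta_t\,\bar S(\blh z_m^n,\blh z_m^n,\blh z_m^{n+1})=\tfrac{1}{\Delta t}\bigl(\bar S(\blh z_m^{n+1},\blh z_m^{n+1},\blh z_m^{n+2})-\bar S(\blh z_m^n,\blh z_m^n,\blh z_m^{n+1})\bigr)
\end{equation*}
involves $\blh z_m^{n+2}$, which never enters if you only test the scheme at time level $n$. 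The paper (following the cited Theorem~4.5 of Eidnes et al.) forms \emph{three} inner products --- with $\tfrac13\delta_t\blt z^{n}$ and $\tfrac13\delta_t\blt z^{n+1}$ against the scheme at step $n$, and with $\tfrac13\delta_t\blt z^{n}$ against the scheme at step $n+1$ --- and only the \emph{sum} of the three right-hand sides, via the total symmetry of $\bar{\bl S}$, equals $\delta_t\bar S(\blh z_m^n,\blh z_m^n,\blh z_m^{n+1})$; the corresponding three $L$-terms are exactly what the three $L_+$-summands in $\bar E_m^n$ require. (Relatedly, your step (i) is too quick: in the second and third tests the $K$-contributions $(\delta_t\blh z_m^{n+1})^\top K\delta_t\blh z_m^{n}$ and $(\delta_t\blh z_m^{n})^\top K\delta_t\blh z_m^{n+1}$ do not vanish individually, only their sum does by skew-symmetry.) Your appeal to ``repeat the argument verbatim'' would, if actually carried out, supply the missing two tests, but as written your steps (i)--(iv) describe a computation that stalls at precisely the point you yourself identify as the nontrivial one.
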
   
\begin{proof}
	Taking the inner product with $\frac{1}{3}\delta_t \blt z^n$ on both sides of \eqref{eqn:LIGEP-ROM}, we get
	\begin{align}
		\frac{1}{3}(\delta_t \blt z^n)^\top\bl V^\top\bl K\delta_{t}\blh z^{n}+ \frac{1}{3}(\delta_t \blt z^n)^\top\bl V^\top\bl L\mu_{t}\bl{D}_x\blh z^{n}=(\delta_t \blt z^n)^\top\bl V^\top\frac {\partial \bar{\bl S}}{\partial \bl x}\bigg\rvert_{(\blh z^{n},\blh z^{n+1})}
	\end{align}
	which by using the skew-symmetry property of the matrix $K$  can be written  as
	\begin{align}\label{inner product1}
		\frac{1}{3}(\delta_t \blh z_m^n)^\top L\mu_{t}\delta_x^{1/2}\blh z_m^{n}=(\delta_t \blh z_m^n)^\top \frac {\partial \bar{S}}{\partial x}\bigg\rvert_{(\blh z_m^{n},\blh z_m^{n+1})}.
	\end{align}
	Similarly, taking the inner product with $\frac{1}{3}\delta_t \blt z^{n+1}$ on both sides of \eqref{eqn:LIGEP-ROM}, we get
	\begin{align}\label{inner product2}
		\frac{1}{3}(\delta_t \blh z_m^{n+1})^\top K\delta_{t}\blh z_m^{n}+ \frac{1}{3}(\delta_t \blh z_m^{n+1})^\top L\mu_{t}\delta_x^{1/2}\blh z_m^{n}=(\delta_t \blh z_m^{n+1})^\top \frac {\partial \bar{S}}{\partial x}\bigg\rvert_{(\blh z_m^{n},\blh z_m^{n+1})}.
	\end{align} 
	Furthermore, taking the inner product with $\frac{1}{3}\delta_t \blt z^n$ on both sides of \eqref{eqn:LIGEP-ROM} for the next time step, we have
	\begin{align}\label{inner product3}
		\frac{1}{3}(\delta_t \blh z_m^{n})^\top K\delta_{t}\blh z_m^{n+1}+ \frac{1}{3}(\delta_t \blh z_m^{n})^\top L\mu_{t}\delta_x^{1/2}\blh z_m^{n+1}=(\delta_t \blh z_m^{n})^\top \frac {\partial \bar{S}}{\partial x}\bigg\rvert_{(\blh z_m^{n+1},\blh z_m^{n+2})}.
	\end{align} 
	Adding equations \eqref{inner product1}, \eqref{inner product2} and \eqref{inner product3}, we get 
	\begin{equation}\label{inner product}
		\begin{split}
			\frac{1}{3} \big((\delta_t \blh z_m^n)^\top L\mu_{t}\delta_x^{1/2}\blh z_m^{n} +(\delta_t \blh z_m^{n+1})^\top L\mu_{t}\delta_x^{1/2}\blh z_m^{n}&\\
			+ (\delta_t \blh z_m^{n})^\top L\mu_{t}\delta_x^{1/2}\blh z_m^{n+1}\big)& =\delta_t \bar{S}(\blh z_m^n,\blh z_m^n,\blh z_m^{n+1}),
		\end{split}
	\end{equation}
	and by using the commutative laws and discrete Leibniz rules, we have
	\begin{equation}\label{using discrete Leibnize rule}
		\begin{split}
			\delta_t((\delta_x^{1/2}\blh z^{n}_m)^\top L_+\blh z_m^n)&=(\delta_x^{1/2}\delta_t\blh z^{n}_m)^\top  L_+\mu_t\blh z_m^n+(\delta_x^{1/2}\mu_t \blh z_m^{n})^\top L_+ \delta_t \blh z_m^n,\\
			\delta_t((\delta_x^{1/2} \blh z_m^{n})^\top L_+\blh z_m^{n+1})&=(\delta_x^{1/2} \delta_t\blh z_m^{n})^\top  L_+\mu_t\blh z_m^{n+1}+(\delta_x^{1/2}\mu_t\blh z_m^{n})^\top L_+ \delta_t \blh z_m^{n+1},\\
			\delta_t((\delta_x^{1/2}\blh z_m^{n+1})^\top L_+\blh z_m^n)&=(\delta_x^{1/2}\delta_t\blh z_m^{n+1})^\top  L_+\mu_t\blh z_m^n+(\delta_x^{1/2}\mu_t\blh z_m^{n+1})^\top L_+ \delta_t \blh z_m^n.
		\end{split} 
	\end{equation} 
	Based on the above equations \eqref{inner product} and \eqref{using discrete Leibnize rule}, we obtain
	\begin{equation*}
		\begin{split}
			\delta_t \bar{E}_m^n =& \,  \delta_t \bar{S}(\blh z_m^n,\blh z_m^n,\blh z_m^{n+1})+ \frac{1}{3}\delta_t\big(((\delta_x^{1/2}\blh z_m^{n})^\top L_+\blh z_m^n)+(\delta_x^{1/2} \blh z_m^{n})^\top L_+\blh z_m^{n+1}+(\delta_x^{1/2} \blh z_m^{n+1})^\top L_+\blh z_m^n\big)\\
			= & \, \frac{1}{3}\big((\delta_t\blh z_m^n)^\top L_+(\delta_x^{1/2}\mu_t\blh z_m^{n})+(\delta_x^{1/2} \delta_t\blh z_m^{n})^\top L_+\mu_t\blh z_m^n\big)\\
			& \, +\frac{1}{3}\big((\delta_t\blh z_m^{n+1})^\top L_+(\delta_x^{1/2} \mu_t\blh z_m^{n})+(\delta_x^{1/2} \delta_t\blh z_m^{n+1})^\top L_+\mu_t\blh z_m^n\big)\\
			& \, +\frac{1}{3}\big((\delta_t\blh z_m^{n})^\top L_+(\delta_x^{1/2}\mu_t\blh z_m^{n+1})+(\delta_x^{1/2} \delta_t\blh z_m^{n})^\top L_+\mu_t\blh z_m^{n+1}\big)\\
			=  & \, \frac{1}{6h}\big((\delta_t\blh z_m^n)^\top L_+(\mu_t\blh z_{m+1}^{n}-\mu_t\blh z_{m-1}^{n})+(\delta_t\blh z_{m+1}^{n}-\delta_t\blh z_{m-1}^{n})^\top L_+\mu_t\blh z_m^n\big)\\
			& \, +\frac{1}{6h}\big((\delta_t\blh z_m^{n+1})^\top L_+(\mu_t\blh z_{m+1}^{n}-\mu_t\blh z_{m-1}^{n})+(\delta_t\blh z_{m+1}^{n+1}-\delta_t\blh z_{m-1}^{n+1})^\top L_+\mu_t\blh z_m^n\big)\\
			& \, +\frac{1}{6h}\big((\delta_t\blh z_m^{n})^\top L_+(\mu_t\blh z_{m+1}^{n+1}-\mu_t\blh z_{m-1}^{n+1})+(\delta_t\blh z_{m+1}^{n}-\delta_t\blh z_{m-1}^{n})^\top L_+\mu_t\blh z_m^{n+1}\big)\\
		\end{split}.
	\end{equation*}
	which by using periodic boundary conditions implies 
	\begin{equation*}
		\begin{split}
			\sum_{m=1}^{N}\delta_t \bar{E}_m^n
			=&\frac{1}{6h}\sum_{m=1}^{N}\big((\delta_t\blh z_m^n)^\top L_+(\mu_t\blh z_{m+1}^{n})-(\delta_t\blh z_{m-1}^{n})^\top L_+\mu_t\blh z_m^n\big)\\
			& \, +\frac{1}{6h}\sum_{m=1}^{N}\big( (\delta_t\blh z_{m+1}^{n})^\top L_+\mu_t\blh z_m^n -(\delta_t\blh z_m^n)^\top L_+(\mu_t\blh z_{m-1}^{n}) \big)\\
			& \, +\frac{1}{6h}\sum_{m=1}^{N}\big((\delta_t\blh z_m^{n+1})^\top L_+(\mu_t\blh z_{m+1}^{n})-(\delta_t\blh z_{m-1}^{n+1})^\top L_+\mu_t\blh z_m^n\big)\\
			& \, +\frac{1}{6h}\sum_{m=1}^{N}\big((\delta_t\blh z_{m+1}^{n+1})^\top L_+\mu_t\blh z_m^n-(\delta_t\blh z_m^{n+1})^\top L_+(\mu_t\blh z_{m-1}^{n})\big)\\
			& \, +\frac{1}{6h}\sum_{m=1}^{N}\big((\delta_t\blh z_m^{n})^\top L_+(\mu_t\blh z_{m+1}^{n+1})-(\delta_t\blh z_{m-1}^{n})^\top L_+\mu_t\blh z_m^{n+1}\big)\\
			& \, +\frac{1}{6h}\sum_{m=1}^{N}\big((\delta_t\blh z_{m+1}^{n})^\top L_+\mu_t\blh z_m^{n+1}-(\delta_t\blh z_m^{n})^\top L_+(\mu_t\blh z_{m-1}^{n+1})\big)=0.
		\end{split}
	\end{equation*}
	Hence,  the discrete global energy conservation law	$\bar{\mathcal{E}}_r^{n+1}= \bar{\mathcal{E}}_r^{n}$ is satisfied.
\end{proof}

\section{Numerical Results}\label{sec:num}	
In this section, we demonstrate the performance of the proposed global energy preserving method to construct ROMs using three examples. We compare the LIGEP-ROM \eqref{eqn:LIGEP-ROM} with the classical POD-Galerkin model, see, e.g., \cite{berkooz93,morVol13}. Our first example represents a linear wave equation with a quadratic Hamiltonian function. Then, we study the Korteweg--de Vries (KdV) equation to validate our results. Finally, we test our method using the Camassa-Holm (CH) equation. Both KdV and CH equations have cubic Hamiltonians.
We test the accuracy of the models with the following relative state error
 \begin{equation}\label{eqn:state-l2err}
\frac{ \|\bl u(t_j)-\blh u(t_j)\|_2 }{\|\bl u(t_j)\|_2},
 \end{equation}
 where $ \bl u(t_j) $ is the full-order solution, and $ \blh u(t_j) $ is the approximated solution either obtained using the LIGEP-ROM or using the POD-Galerkin method.

Let us denote the polarized global energy preserved by FOM \eqref{eqn:LIGEP-FOM} with $ \bar{\mathcal{E}} $. We use two different global energy error measures to examine the performance of the ROMs. First, we test the global energy preservation performance of the FOM and ROMs separately using the following absolute error:
   \begin{equation}\label{eqn:pole_err}
 | \bar{\mathcal{E}}(t_j)-\bar{\mathcal{E}}(t_0)|,
 \end{equation}
 where for the ROMs, instead of full-order polarized energy $  \bar{\mathcal{E}} $, we use the approximated polarized energy $ \bar{\mathcal{E}}_r $. The global energy error \eqref{eqn:pole_err} tests the global energy preservation performance individually. Hence, preserving the approximated global energy in terms of \eqref{eqn:pole_err} does not necessarily imply the convergence of the approximated global energies to the full-order global energy,  so to check the error between the approximated global energy  $ \bar{\mathcal{E}}_r $ and full-order global energy  $ \bar{\mathcal{E}}$, we use the following absolute error
 \begin{equation}\label{eqn:pole_fr_err}
 | \bar{\mathcal{E}}(t_n)-\bar{\mathcal{E}}_r(t_n)|.
 \end{equation}
\subsection{Linear wave equation}\label{subsec:wave}
We begin with the simple linear wave equation of the form:
\begin{equation}\label{eqn:wave}
	u_{tt} =u_{xx},
\end{equation}
which is an example of multi-symplectic Hamiltonian PDEs \eqref{eqn:Ms_PDEs}. Introducing the following variables
\begin{equation*}
	\begin{split}
		v=u_t, \quad w=u_x
	\end{split},
\end{equation*}
the wave equation \eqref{eqn:wave} can be written as a multi-symplectic PDE with
\begin{equation*}
	z=
	\begin{bmatrix}
		u\\
		v\\
		w
	\end{bmatrix},\qquad	
	K=
	\begin{bmatrix}
		0  &-1 & 0  \\
		1  & 0 & 0  \\
		0  & 0 & 0 
	\end{bmatrix},\qquad
	L=
	\begin{bmatrix}
		0  & 0  & 1 \\
		0  & 0  & 0 \\
		-1  & 0  & 0   
	\end{bmatrix}
\end{equation*}	
and the Hamiltonian $S(z)=\frac{1}{2}(v^2-w^2)$. The fully-discrete wave equation obtained with the LIGEP method \eqref{eqn:LIGEP-FOM} reads as follows:
\begin{equation}\label{eqn:wave-LIGEP-couple}
	\begin{split}
		-\delta_t v_j^n+ \delta_x^{1/2}\mu_tw_j^n &=0,\\
		\delta_tu_j^n &= \mu_tv_j^n,\\
		-\delta_x^{1/2} \mu_t u_j^n &=-\mu_tw_j^n.
	\end{split}
\end{equation}
After the elimination of the auxiliary variables, fully discrete equations can be  equivalently written as follow:
\begin{align}\label{eqn:wave_LIGEP}
	\delta_t^2u_j^n - \mu_t^2 \left(\delta_x^{1/2}\right)^2 u^{n}_j=0,
\end{align}
where $\left(\delta_x^{1/2}\right)^2=\delta_x^{1/2}\delta_x^{1/2}$. The polarized discrete energy preserved by the FOM \eqref{eqn:wave_LIGEP} is given by
\begin{equation}\label{eqn:wave-fom-pol-ener}
	\begin{split}
		\bar{\mathcal{E}}(t_n) = & \, \frac{\Delta x}{6}\sum_{j=1}^{N}\Big(2(\delta_x^{1/2}u^n_j)(\delta_x^{1/2} u^{n+1}_j) + (\delta_x^{1/2}u^n_j)^2+2v^n_jv^{n+1}_j + (v^n_j)^2\Big),
	\end{split}
\end{equation}
where $ v^n_j=\delta_tu^n_j-\frac{\Delta t}{2}\mu_t(\delta_x^{1/2})^{2}u^n_j .$

In order to employ the LIGEP-ROM, we  construct the following snapshot matrix:
$$\bl Z=\left[ \bl u(t_1),\ldots,\bl u(t_{N_t}),\bl v(t_1),\ldots,\bl v(t_{N_t}),\bl w(t_1),\ldots,\bl w(t_{N_t})\right]\in \mathbb{R}^{N\times 3 N_t},$$
where the states $ \bl v $ and $ \bl w $ can be obtained from \eqref{eqn:wave-LIGEP-couple}, which are $ v^n_j=\delta_tu^n_j-\frac{\Delta t}{2}\mu_t(\delta_x^{1/2})^{2}u^n_j $ and $ \delta_x^{1/2}  u_j^n =w_j^n .$  
Therefore, a fully-discrete ROM for the wave equation by the LIGEP-ROM \eqref{eqn:LIGEP-ROM} method can be written as follow:
\begin{equation}\label{eqn:wave_LIGEP_ROM_el}
	\begin{split}
		-\delta_t \blt v^n+ \mu_t \tilde{D}_x\blt w^n &=0,\\
		\delta_t \blt u^n &= \mu_t \blt v^n,\\
		-\mu_t \tilde{D}_x \blt u^n &=-\mu_t \blt w^n,
	\end{split}
\end{equation}
where $ \tilde{D}_x=V^\top D_xV $.
Eliminating  the auxiliary variables in \eqref{eqn:wave_LIGEP_ROM_el}, the fully-discrete LIGEP-ROM can be written as
\begin{equation}\label{eqn:wave_LIGEP_ROM}
	\delta_t^2\blt u^n - \mu_t^2 \tilde{D}_x^2 \blt u^{n}=0.
\end{equation}

The ROM \eqref{eqn:wave_LIGEP_ROM} preserves the following approximated polarised energy
\begin{equation}\label{eqn:wave-rom-pol-ener}
	\begin{split}
		\bar{\mathcal{E}_r}(t_n) = & \, \frac{\Delta x}{6}\sum_{j=1}^{N}\Big(2( V \tilde{D}_x \blt u^n)_j( V \tilde{D}_x \blt u^{n+1})_j + ( V \tilde{D}_x \blt u^n)_j^2+2\blh v^n_j \blh v^{n+1}_j + (\blh v^n_j)^2\Big),
	\end{split}
\end{equation}
where 
\begin{equation*}
	\blh v^n_j=\delta_t \blh u^n_j-\frac{\Delta t}{2}\mu_t\left(( V \tilde{D}_x)^{2}\blt u^n\right)_j.
\end{equation*}

To compare the LIGEP-ROM with the classical POD-Galerkin method, we consider the Hamiltonian form of the wave equation \cite{peng2016},
\begin{equation}\label{eqn:Ham_wave}
	\frac{\partial}{\partial t}
	\begin{bmatrix}
		u\\v
	\end{bmatrix}
	=
	\begin{bmatrix}
		0&1\\\partial_{xx}&0
	\end{bmatrix}
	\begin{bmatrix}
		u\\v
	\end{bmatrix},
\end{equation}
which is obtained by introducing the variable $v=u_t$.
After spatial discretization, a semi-discrete Hamiltonian ODE system for wave equation reads as 
\begin{equation}\label{eqn:Wave-Ham-ODE}
	\frac{d \bl y}{d t}=\bl J \bl y, 
\end{equation}
where
$$\bl y=\begin{bmatrix}
	\bl u\\ \bl v
\end{bmatrix}, \quad \bl{J}=\begin{bmatrix}
	0_N&I_N\\ D_{xx}& 0_N
\end{bmatrix}, $$
$ D_{xx}\in \mathbb{R}^{N\times N}$ is the three-point central difference approximation of $\partial_{xx} $ and $0_N\in \mathbb{R}^{N\times N}$ is null matrix.

We obtain the snapshots for the classical POD-Galerkin method by discretizing the Hamiltonian ODE system \eqref{eqn:Wave-Ham-ODE} by Kahan's method \eqref{eqn:Kahan} in time. The snapshot matrix for the classical POD method constructed as follows:
$$\bl S=\left[\bl y(t_1),\ldots,\bl y(t_{N_t})\right]\in \mathbf{R}^{2 N\times N_t} .$$

For a clear distinction, let us denote $ W \in \mathbb{R}^{2N\times r} $ as the POD matrix, then the semi-discrete POD-Galerkin model is computed as follows:
\begin{equation}\label{eqn:Wave-ROM-ODE}
	\frac{d \blt y}{d t}=\blt J \blt y,
\end{equation}
where $ \blt J=W^\top\bl JW\in \mathbb{R}^{r\times r} $. We obtain a fully-discrete POD-Galerkin model by integrating the semi-discrete POD-Galerkin model \eqref{eqn:Wave-ROM-ODE} with Kahan's method \eqref{eqn:Kahan} in time.

We consider the following initial conditions \cite{kosmas2021}
\begin{align*}
	u_t(x,0)&=0,\\
	u(x,0)&=\sech(x),
\end{align*}
on the domain $ [-10,10] $ with periodic boundary conditions.
To examine the stability of the ROMs, we set the final time for basis construction for both ROMs to $T=10$, and then we simulate the ROMs up to $T=40$. We set the temporal step-size as $\Delta t=0.01$  and spatial step-size $\Delta x=0.02$.

In Figure~\ref{fig:wave-l2}, we plot the relative state error \eqref{eqn:state-l2err} of the ROMs, which shows that by increasing the order of the ROMs, POD-Galerkin does not yield a stable surrogate model after time $T=10$, whereas when global energy preservation is enforced to the ROM, the model remains stable. Additionally, we present the full and reduced-order solutions in Figure~\ref{fig:wave-fmrms} for reduced-order $r=50$, which shows that the important characteristic of the FOM is captured in the LIGEP-ROM case. 

\begin{figure}[tb]
	\centering
	\includegraphics[width=0.48\linewidth]{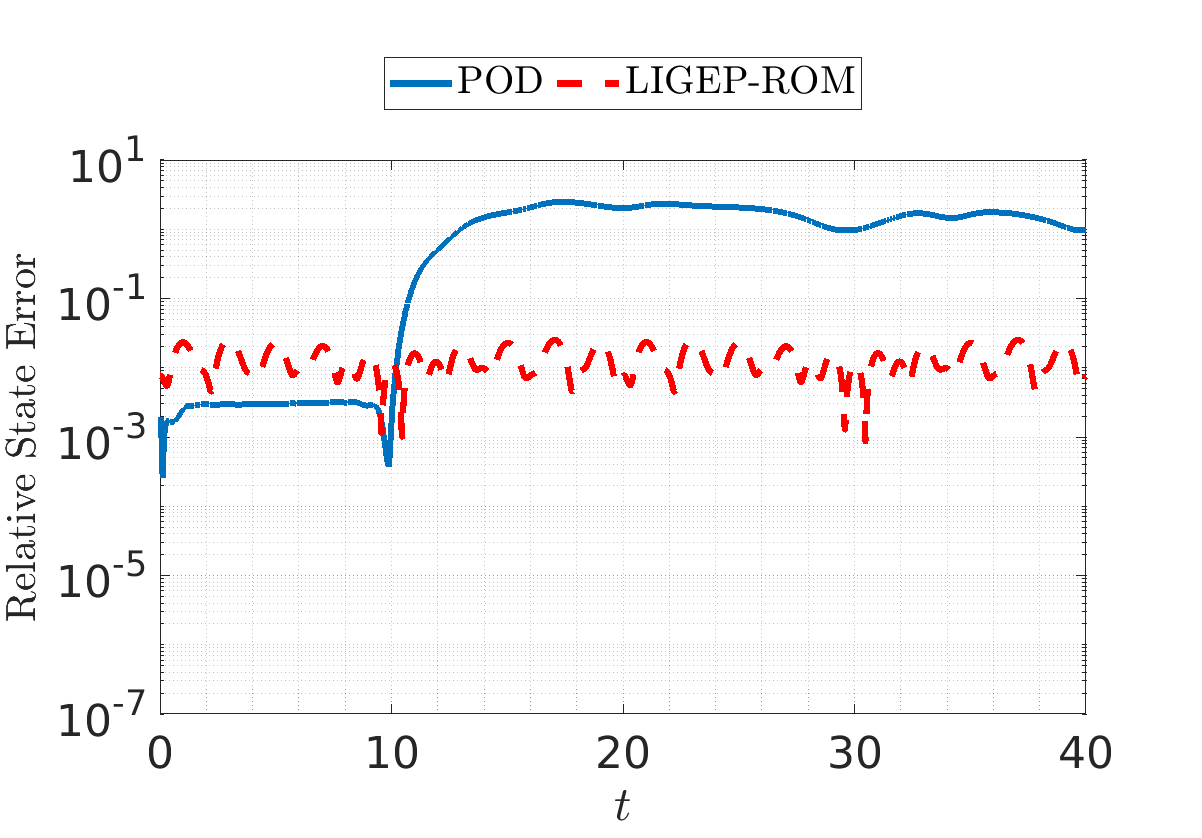}	
	\includegraphics[width=0.48\linewidth]{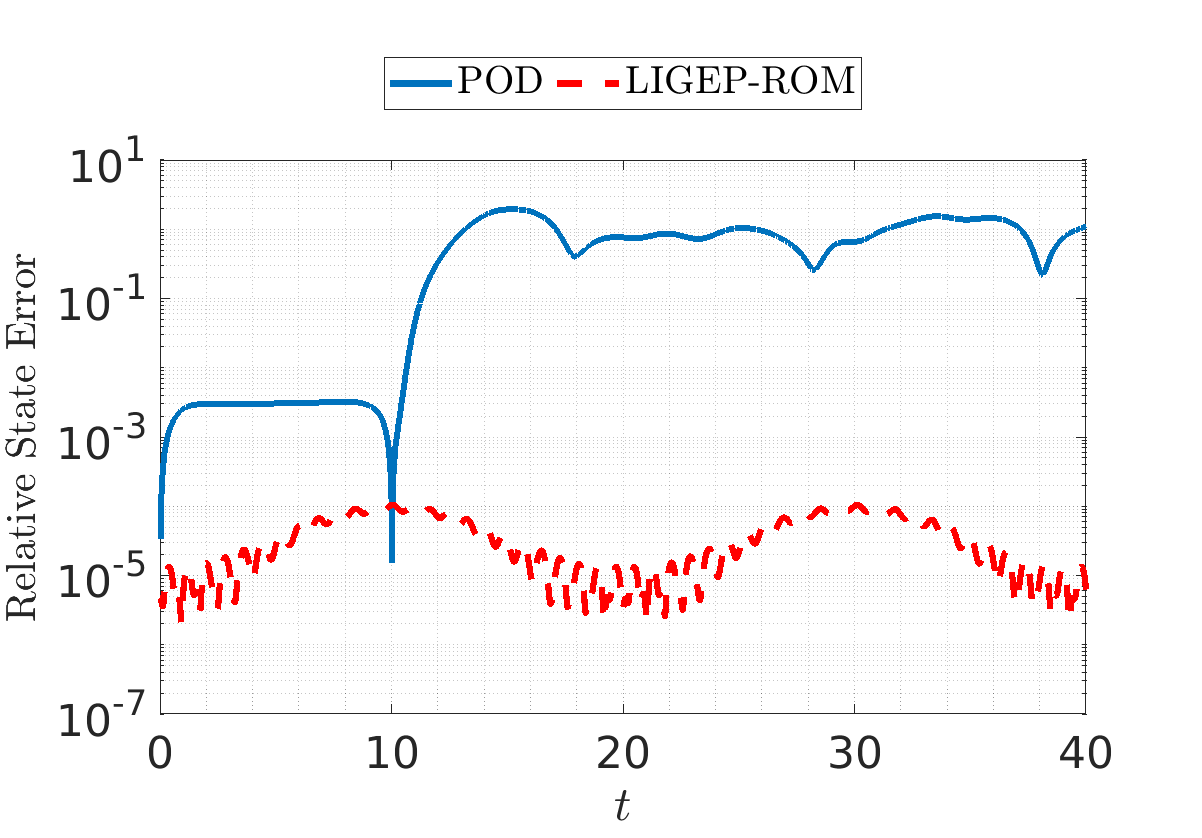}
	
	\caption{Linear wave equation: relative state errors for ROMs of orders $r=20$ and $ r=50 $ are shown in the left and right, respectively.}
	\label{fig:wave-l2}
\end{figure}

\begin{figure}[tb]
	\centering
	\includegraphics[width=0.32\linewidth]{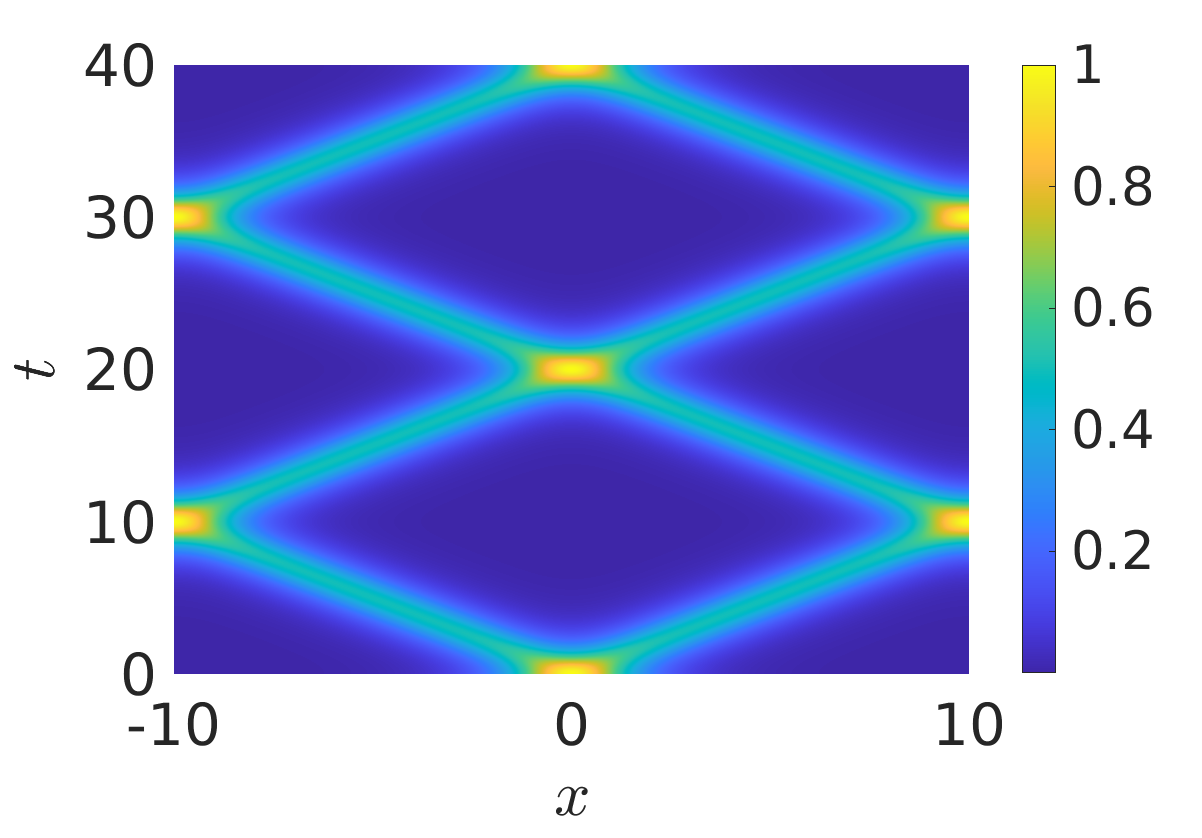} 
	\includegraphics[width=0.32\linewidth]{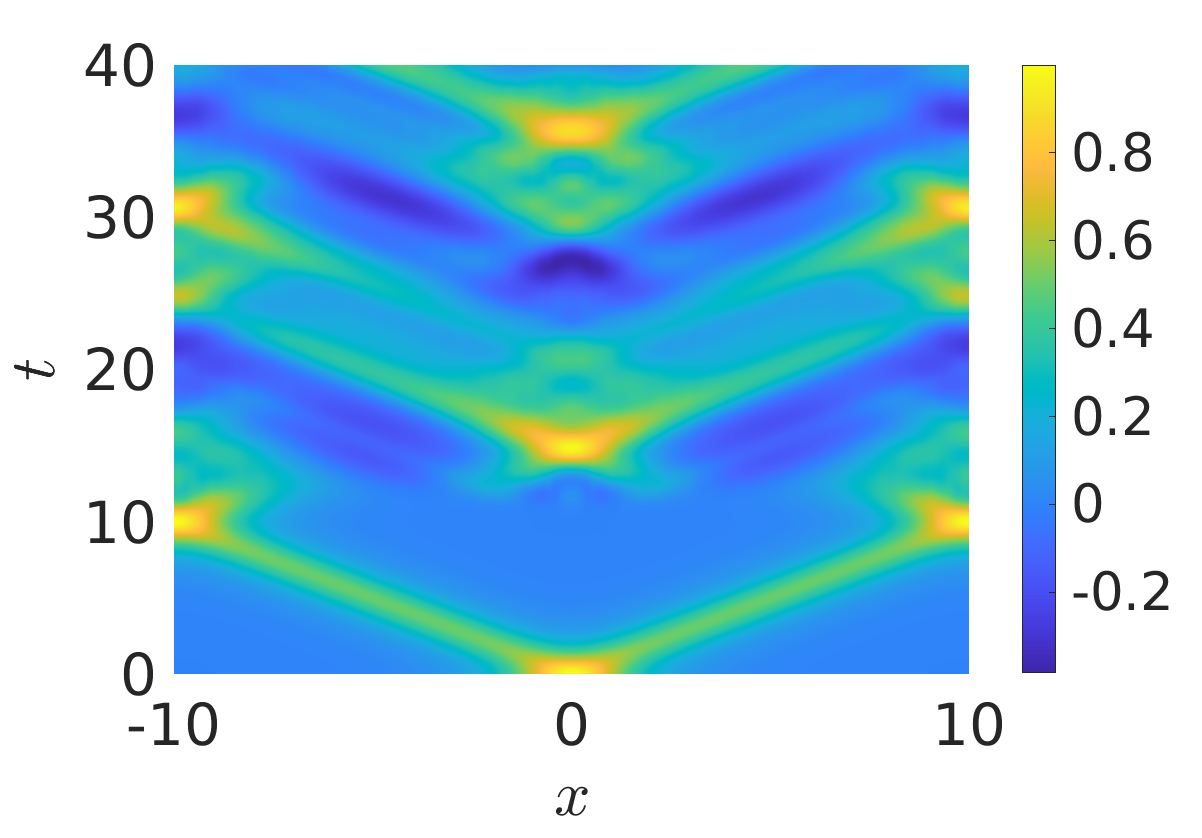} 
	\includegraphics[width=0.32\linewidth]{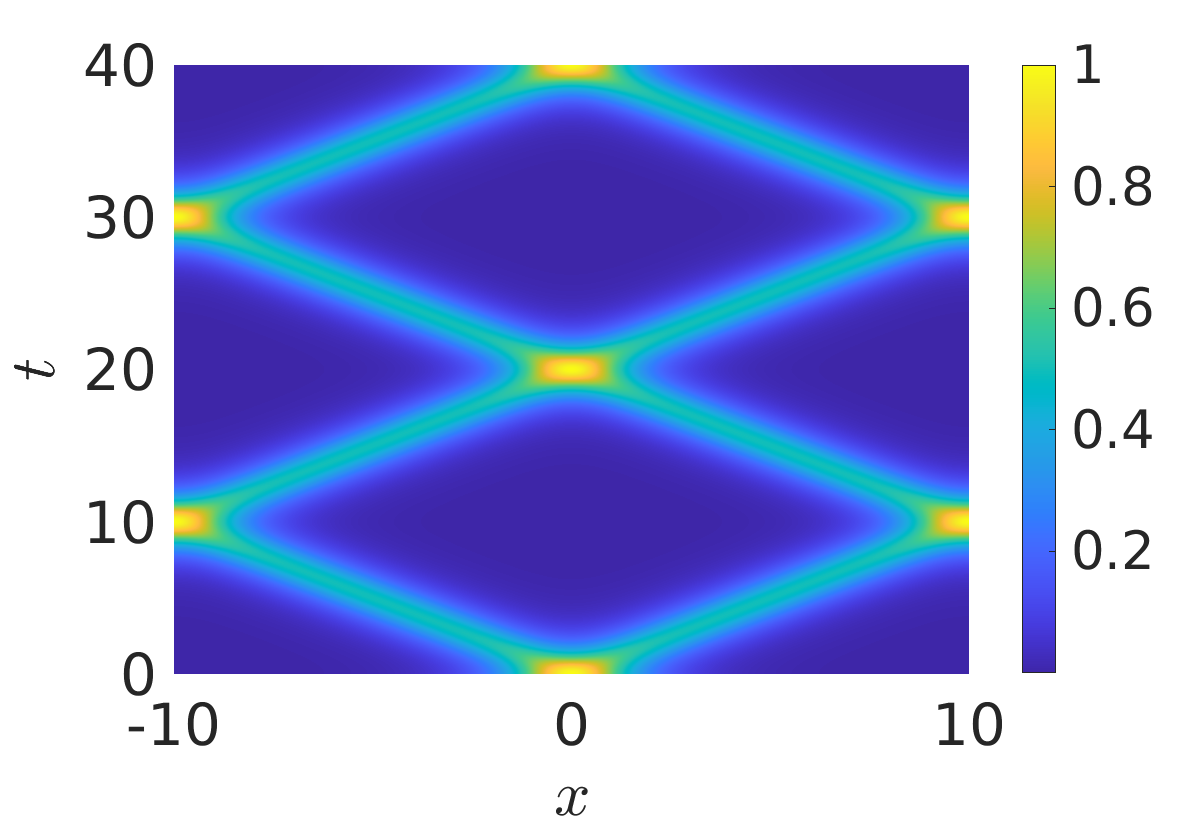} 
	\caption{Linear wave equation:  a comparison between the FOM solution and reduced-order solutions (for order $r = 50$) obtained using the POD-Galerkin and LIGEP-ROM is shown. The FOM solutions is plotted in the left, and the solutions obtained using POD-Galerkin and LIGEP-ROM are plotted in the middle and right, respectively.}
	\label{fig:wave-fmrms}
\end{figure}

Figure~\ref{fig:wave-pol-energy} shows the discrete preserved polarized energies for the FOM \eqref{eqn:wave-fom-pol-ener} and the LIGEP-ROM \eqref{eqn:wave-rom-pol-ener}. We also plot the discrete polarized energy \eqref{eqn:wave-rom-pol-ener} for the POD-Galerkin model in Figure~\ref{fig:wave-pol-energy}, which shows that when the system is stable, the energy remains constant for the POD-Galerkin model; nevertheless, after $T=10$ oscillations in the energy occur, and increasing the order of the POD-Galerkin model does not change the preservation capability of the method.

\begin{figure}[tb]
	\centering
	\includegraphics[width=0.48\linewidth]{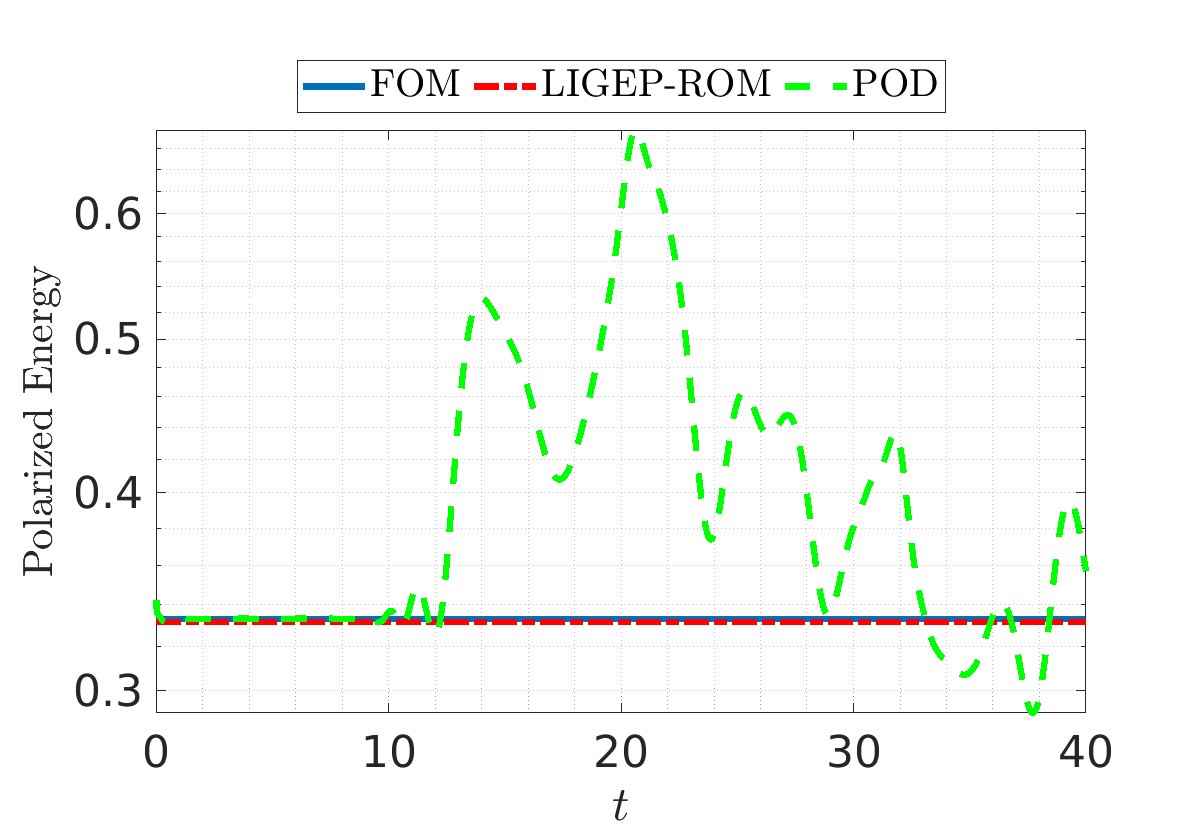}
	\includegraphics[width=0.48\linewidth]{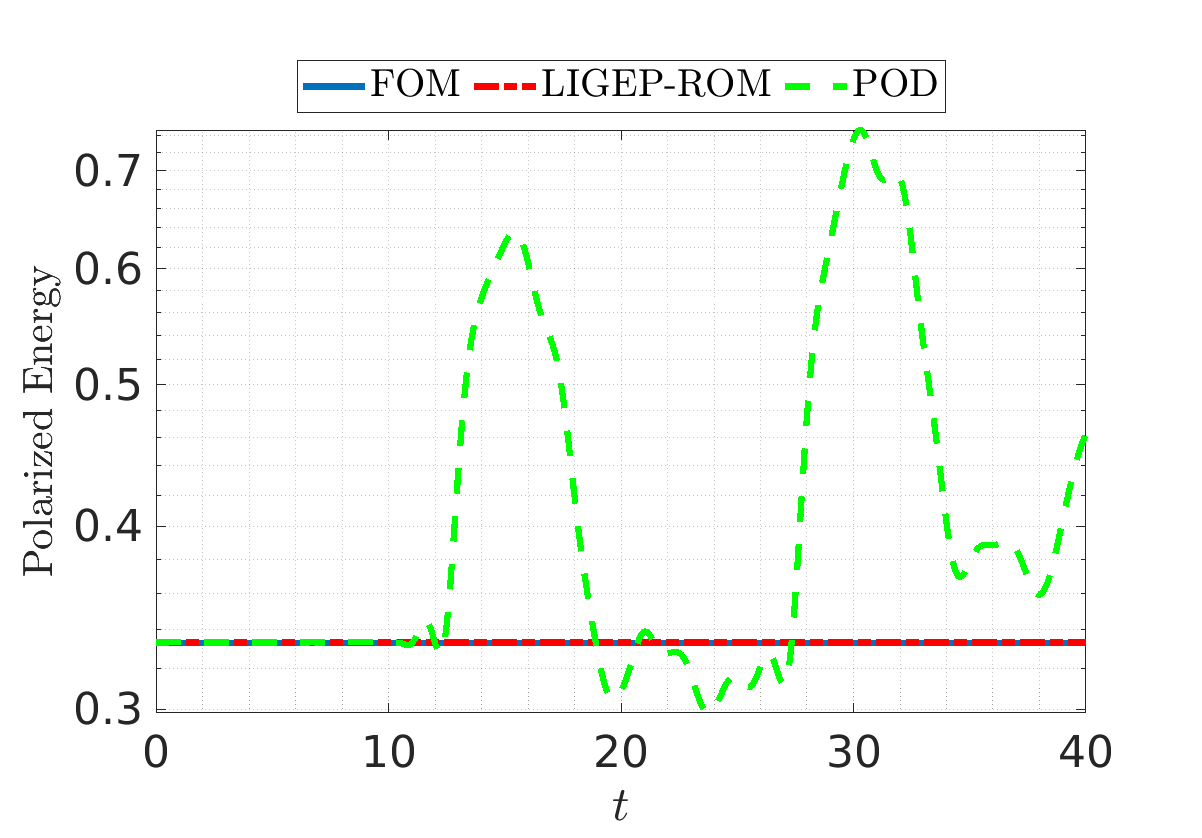}
	\caption{Linear wave equation: polarized discrete global energies of the FOM and ROMs. In the left, the ROMs are of order $r = 20$, and in right, they are of order $r = 50$.}
	\label{fig:wave-pol-energy}
\end{figure}

\begin{figure}[tb]
	\centering
	\includegraphics[width=0.48\linewidth]{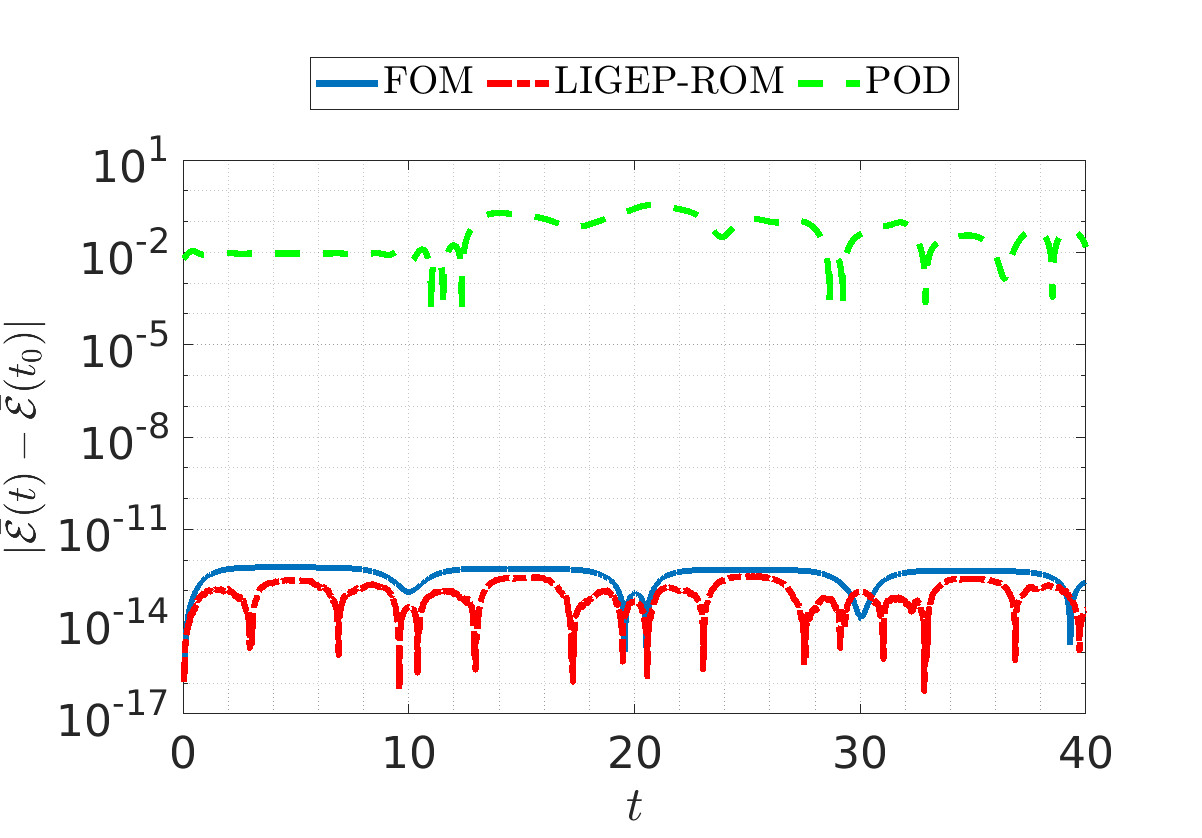}
	\includegraphics[width=0.48\linewidth]{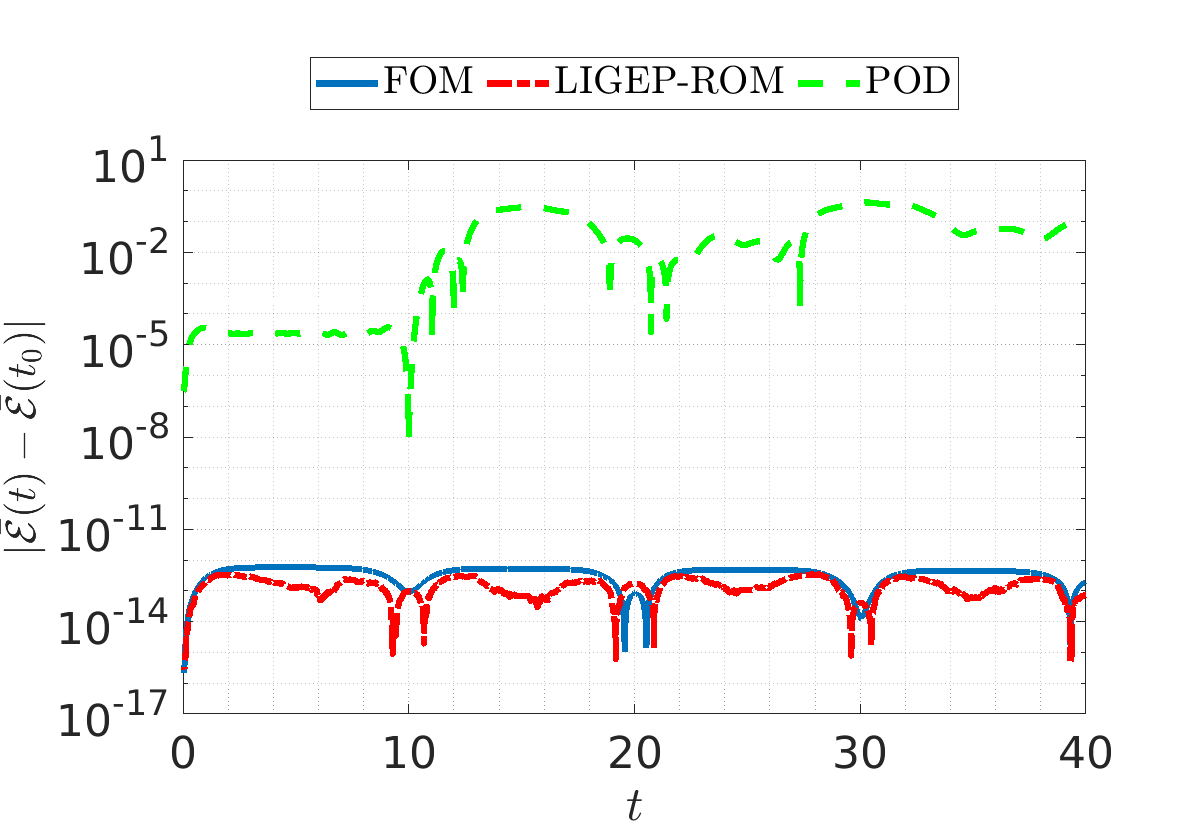}
	\caption{Linear wave equation: polarized discrete global energy errors \eqref{eqn:pole_err} of FOM and ROMs are shown with reduced-order being $ r=20 $ (left) and  $ r=50 $  (right).}
	\label{fig:wave-pol-energy-ac}
\end{figure}

In Figure~\ref{fig:wave-pol-energy-ac}, we examine the performance of the FOM and ROMs in terms of global energy preservation accuracy \eqref{eqn:pole_err}, which verifies the theoretical findings in Theorem~\ref{th:globalenergylaw}. Figure~\ref{fig:wave-pol-energy-ac} confirms that regardless of the order $r$ of a ROM, the polarized reduced-order global energy is preserved with a machine precision accuracy. The figure also shows that by increasing the order of the POD-Galerkin model, the global energy preservation accuracy only increases on the time interval $ [0,10] $ employed for training.

In Figure~\ref{fig:wave-polfr-energy-ac}, we test the ROMs with FOM in terms of accuracy of the approximated polarized energies between FOM and ROMs using \eqref{eqn:pole_fr_err}. Figure~\ref{fig:wave-polfr-energy-ac} shows that by increasing the order of the ROMs, the approximated reduced global energy \eqref{eqn:wave-rom-pol-ener} converges to the polarized global energy \eqref{eqn:wave-fom-pol-ener}, whereas for the POD-Galerkin model, increasing the dimension does not affect the convergence much, at least outside of the training. 

\begin{figure}[tb]
	\centering
	\includegraphics[width=0.48\linewidth]{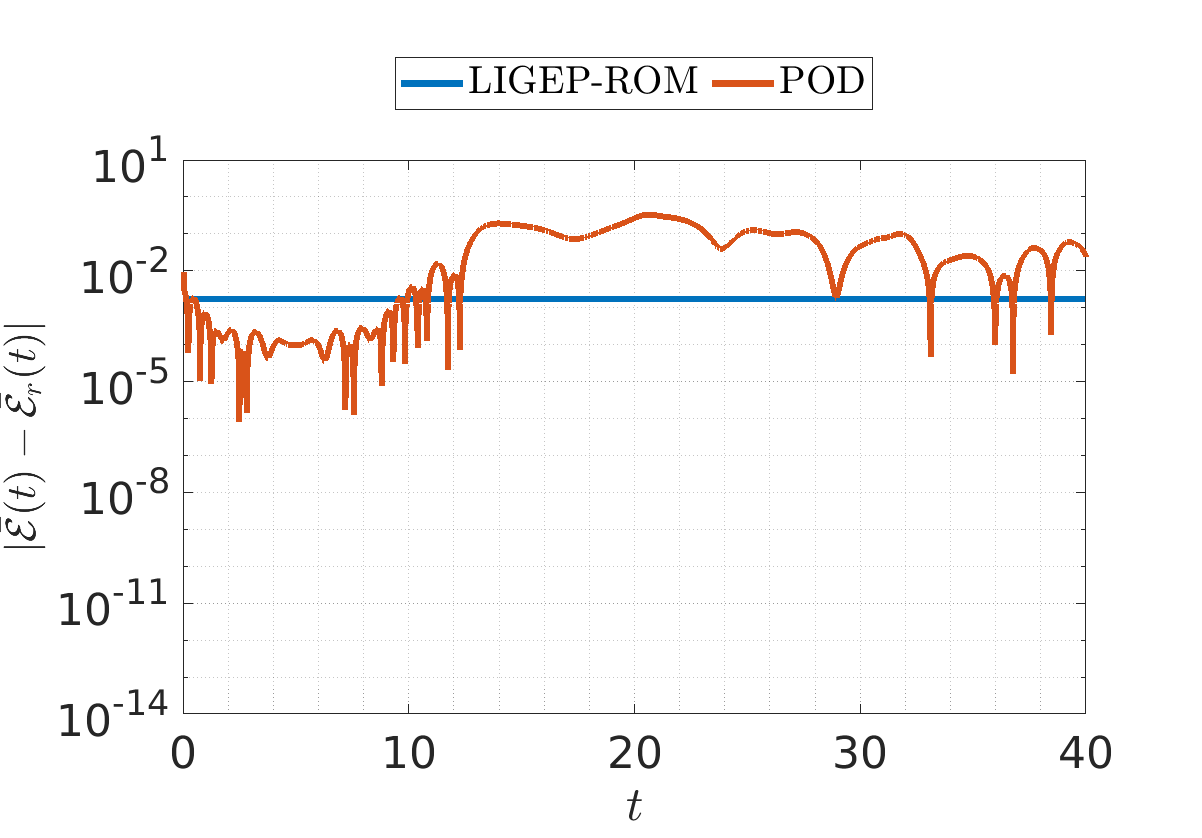}
	\includegraphics[width=0.48\linewidth]{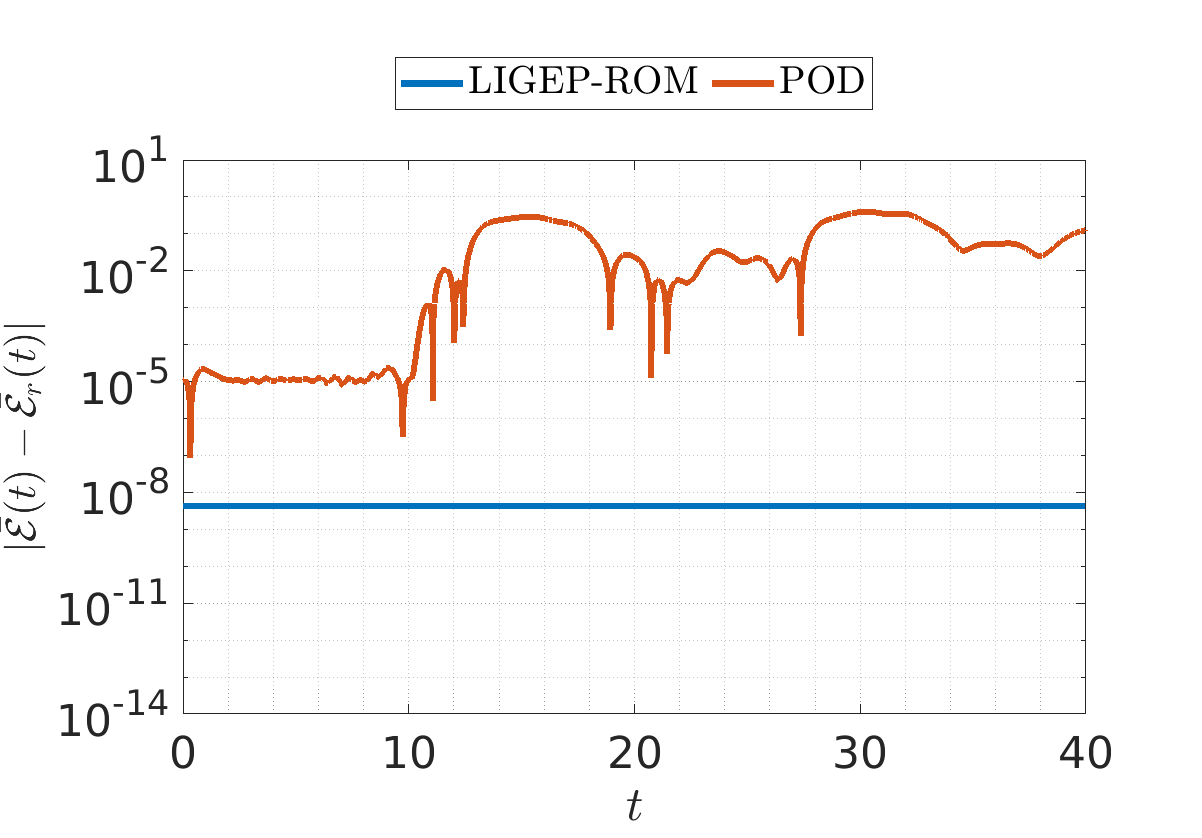}
	\caption{Linear wave equation: polarized discrete global energy errors \eqref{eqn:pole_fr_err} between the FOM and ROMs are shown with reduced-order being $ r=20 $ (left) and  $ r=50 $  (right).}
	\label{fig:wave-polfr-energy-ac}
\end{figure}

\subsection{Korteweg--de Vries equation}\label{subsec:KdV}

For the second test case, we consider the one-dimensional KdV equation, which is frequently used in shallow water waves, internal waves, and plasma physics. The one-dimensional KdV equation has the following form:
\begin{equation}\label{eqn:KdV}
	u_t + \eta u u_x+\gamma^2 u_{xxx}  = 0,
\end{equation}
where $\eta, \gamma \in \mathbb{R}$. By defining the potential $\phi_x=u$, a variable $w=\gamma v_x \phi_t+\frac{\gamma^2u^2}{2}$, and momenta $v=\gamma u_x$, the KdV equation can be written in the form
\begin{equation}\label{eqn:KdV-var}
	\begin{aligned}
		\frac{1}{2}u_t+w_x&=0, \quad&
		-\frac{1}{2}\phi_t-\gamma v_x&=-w+\frac{\eta}{2}u^2,\\
		\gamma u_x&=v,&
		-\phi_x&=-u.
	\end{aligned}
\end{equation}
Using \eqref{eqn:KdV-var}, the KdV equation can be written in the multi-symplectic form \eqref{eqn:Ms_PDEs} with
\begin{equation*}
	K=\begin{bmatrix}
		0       &\frac{1}{2} & 0 & 0 \\
		-\frac{1}{2}       & 0 & 0 & 0 \\
		0      & 0 & 0 & 0 \\
		0      & 0 & 0 & 0  \\
	\end{bmatrix},\qquad
	L=
	\begin{bmatrix}
		0       &0 & 0 &1  \\
		0       & 0 & -\gamma & 0  \\
		0      & \gamma & 0 & 0  \\
		-1      & 0 & 0 & 0 \\
	\end{bmatrix},
\end{equation*}
 $z = (\phi,u,v,w)^\top$, and the Hamiltonian $S(z)$ can be defined as $S(z):=\dfrac{v^2}{2}-uw+\dfrac{\eta u^3}{6}$.
A fully discrete KdV equation with the LIGEP method \eqref{eqn:LIGEP-FOM} reads as

\begin{equation}\label{eqn:KdV-fom-elem}
	\begin{aligned}
		\frac{1}{2}\delta_t u_j^n+\delta_x^{1/2}\mu_tw_j^n &=0,&
		-\frac{1}{2}\delta_t\phi_j^n-\gamma \delta_x^{1/2}\mu_tv_j^n &=-\mu_t w_j^n+\frac{\eta}{2} u_j^n u_j^{n+1},\\
		\gamma \delta_x^{1/2}\mu_tu_j^n &=\mu_t v_j^n,&
		\delta_x^{1/2}\mu_t\phi_j^n &=\mu_t u_j^n.
	\end{aligned}
\end{equation}
After eliminating the auxiliary variables in \eqref{eqn:KdV-fom-elem}, the fully-discrete KdV equation with the LIGEP method becomes
\begin{equation}\label{eqn:KdV_LIGEP}
	\delta_tu_j^n+\frac{\eta}{2}\delta_x^{1/2}(u^n_ju^{n+1}_j)+\gamma^2\mu_t\left(\delta_x^{1/2}\right)^3u^n_j=0.
\end{equation}
The polarised discrete energy preserved by \eqref{eqn:KdV_LIGEP} is
\begin{equation}\label{eqn:KdV-fom-pol-ener}
	\bar{\mathcal{E}}(t_n) = \, \frac {\Delta x}{6}\sum_{j=1}^{N}\left(- \gamma^2(\delta_x^{1/2}u^n_j)^2+2(\delta_x^{1/2}u^n_j)(\delta_x^{1/2} u^{n+1}_j)+\eta(u_j^n)^2 u_j^{n+1} \right).
\end{equation}
For the POD-Galerkin model, we consider the following form of the KdV equation:
$$ u_t = -\eta u u_x-\gamma^2 u_{xxx}.$$
We obtain the POD basis matrix $W\in  \mathbb{R}^{N\times r} $ for the POD-Galerkin model using the following snapshot matrix:
$$\bl S=\left[\bl u(t_1),\ldots,\bl u(t_{N_t})\right]\in \mathbf{R}^{ N\times N_t} ,$$
where the snaphots are generated using the LIGEP model \eqref{eqn:KdV_LIGEP}.
Finally, a semi-discrete POD-Galerkin model is constructed as follows:
$$ \blt u_t =  W^\top \left(-\eta (W \blt u) \circ ( D_x W \blt u)-\gamma^2  D_{xxx} W \blt u\right), $$
where $  D_{xxx} $ corresponds to a second-order central finite difference discretization of the partial derivative~$ \partial_{xxx} $; $ D_{x} $ corresponds to a second-order central finite difference discretization of the partial derivative $ \partial_{x} $, and $ \circ $ denotes the element-wise multiplication of vectors. Then, a fully-discrete POD-Galerkin model is obtained by employing Kahan's method to the semi-discrete POD-Galerkin model.

We obtain the basis for the LIGEP-ROM using the following snapshot matrix:
$$\bl Z=\left[ \boldsymbol{\phi}(t_1),\ldots,\boldsymbol \phi(t_{N_t}),\bl u(t_1),\ldots,\bl u(t_{N_t}),\bl v(t_1),\ldots,\bl v(t_{N_t}),\bl w(t_1),\ldots,\bl w(t_{N_t})\right]\in \mathbb{R}^{N\times 4 N_t}.$$
Here, an approximation of state $ v $ can easily be obtained by using the equation $ v=\gamma u_x $. On the other hand, for the approximation of $ \phi $, we are interested in any $ \phi $ satisfying the equation $ \phi_x=u $. Thus, we consider following time-discrete problem
$$\frac{d}{d x}\boldsymbol \phi(x,t_n)=\bl u(x,t_n), \quad \boldsymbol \phi(x_0,t_n)=0,$$
for obtaining $ \boldsymbol \phi $. For the solution of the above equation, we consider the trapezoid differentiation rule to approximate the state $ \boldsymbol \phi .$
Using the equations in \eqref{eqn:KdV-var}, we obtain the approximation of state $ w $ with following equations
\begin{align*}
	\frac{1}{2}\delta_t u_j^n+\delta_x^{1/2}w_j^n &=0,&
	-\frac{1}{2}\delta_t\phi_j^n-\gamma \delta_x^{1/2}v_j^n &=- w_j^n+\frac{\eta}{2} (u_j^n)^2,\\
	\gamma \delta_x^{1/2}u_j^n &= v_j^n, &
	\delta_x^{1/2}\phi_j^n & = u_j^n.
\end{align*}
To obtain approximation of the state $w_j^n$ using the above equations, we first substitute $ u_j^n=\delta_x^{1/2}\phi_j^n  $ in $ \frac{1}{2}\delta_t u_j^n+\delta_x^{1/2}w_j^n =0 $, and by eliminating the operator $ \delta_x^{1/2} $, we obtain $ \frac{1}{2}\delta_t \phi_j^n +w_j^n =0 $. 

Next, substituting 
$$ \frac{1}{2}\delta_t \phi_j^n =-w_j^n  $$ 
in 
$$ 	-\frac{1}{2}\delta_t\phi_j^n-\gamma \delta_x^{1/2}v_j^n =- w_j^n+\frac{\eta}{2} (u_j^n)^2, $$ 
we obtain  $ w_j^n=\frac{\gamma}{2} \delta_x^{1/2}v_j^n+\frac{\eta}{4}(u_j^n)^2. $ We note that we experimentally observe that to include approximations of the auxiliary variables is important in obtaining a stable ROM.

The ROM of the KdV equation by the LIGEP-ROM \eqref{eqn:LIGEP-ROM} method can be written in a compact form as follow:
\begin{equation}\label{eqn:KdV-LIGEP-ROM}
	\delta_t\blt u^n+\frac{\eta}{2}\tilde{D}_x V^\top (\blh u^n \circ\blh u^{n+1})+\gamma^2\mu_t\tilde{D}_x^3\blt u^n=0.
\end{equation}
The ROM \eqref{eqn:KdV-LIGEP-ROM} preserves the following approximated polarised energy:
\begin{equation}\label{eqn:KdV-rom-pol-ener}
	\begin{split}
		\bar{\mathcal{E}}_r(t_n) = & \, \frac {\Delta x}{6}\sum_{j=1}^{N}\Big(- \gamma^2( V \tilde{D}_x \blt u^n)_j^2+2( V\tilde{D}_x\blt u^n)_j( V \tilde{D}_x \blt u^{n+1})_j+ \eta (\blh u^n)_j^2 \blh u_j^{n+1} \Big).
	\end{split}
\end{equation}
For the KdV equation, we consider the following initial value \cite{eidnes2020}:
 $$u_0(x)= \cos(\pi x),$$ 
 and the parameters $ P=2 $, $\gamma=0.022$, $\eta=1$ on the domain $ [0,P] $ with periodic boundary conditions.  We set the spatial step-size to $ \Delta x=0.001 $, hence, the full order system of dimension is $N = 2000$, and temporal step-size to $ \Delta t=0.01 $. We simulate the FOM~\eqref{eqn:KdV_LIGEP} up to the final time $ T=3 $ to construct the bases for both ROMs.
 
Figure~\ref{fig:KdV-l2} shows the relative state error \eqref{eqn:state-l2err} of the ROMs, where the  POD-Galerkin model exhibits unstable behaviour after $ T=6 $ even after increasing the reduced-order $ r $. For  both orders of the reduced model, i.e., $ r=70 $ and $ r=120 $,  the LIGEP-ROM \eqref{eqn:KdV-LIGEP-ROM} demonstrates a good performance in terms of accuracy and stability.

Due to the unstable behavior of the classical POD-Galerkin model, we only show the solutions of the FOM \eqref{eqn:KdV_LIGEP} and the LIGEP-ROM \eqref{eqn:KdV-LIGEP-ROM} on the time interval $ [0,8] $ in Figure~\ref{fig:KdV}. It shows that the LIGEP-ROM successfully captures the dynamics of the KdV equation with a ROM of order $ r=120 $.

\begin{figure}[tb]
	\centering
	\includegraphics[width=0.48\linewidth]{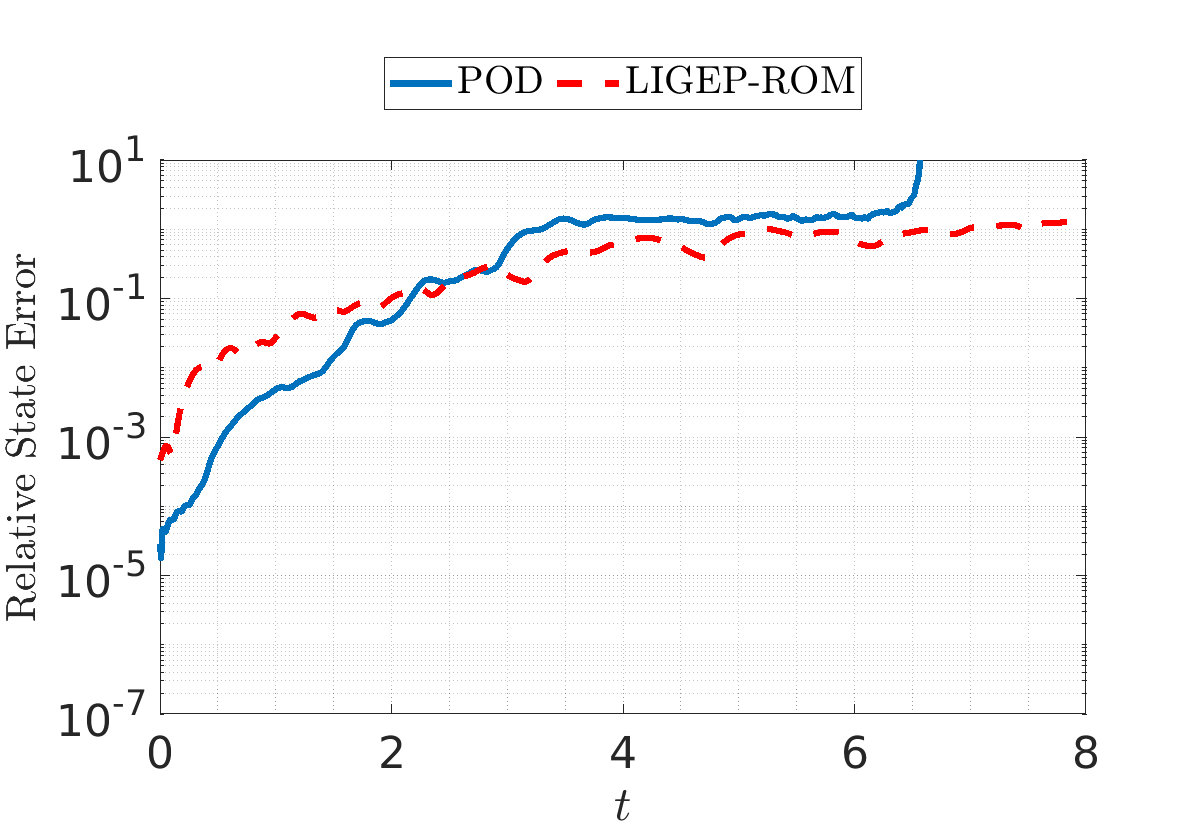}
	\includegraphics[width=0.48\linewidth]{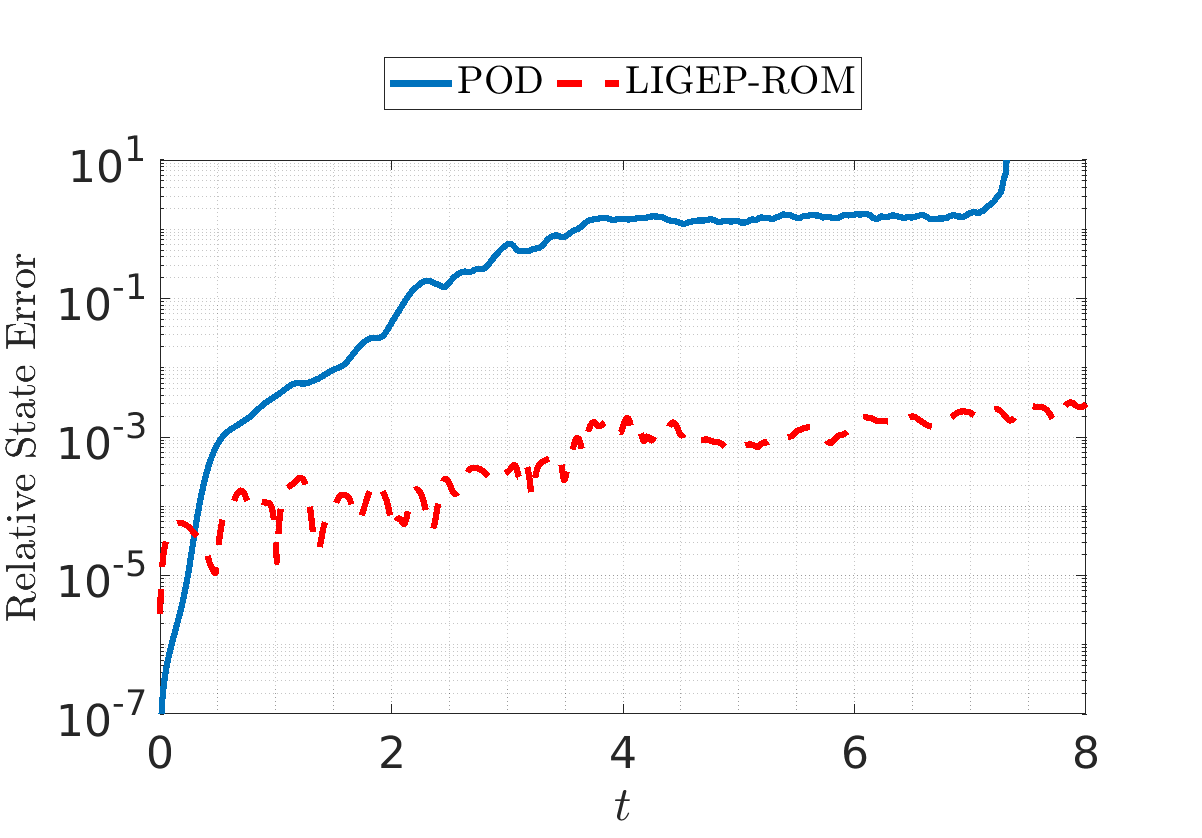}
	\caption{KdV equation: relative state errors for ROMs of order $ r=70 $ and  $ r=120 $ are shown in the left and right, respectively.}
	\label{fig:KdV-l2}
\end{figure}

\begin{figure}[tb]
	\centering
	\includegraphics[width=0.32\linewidth]{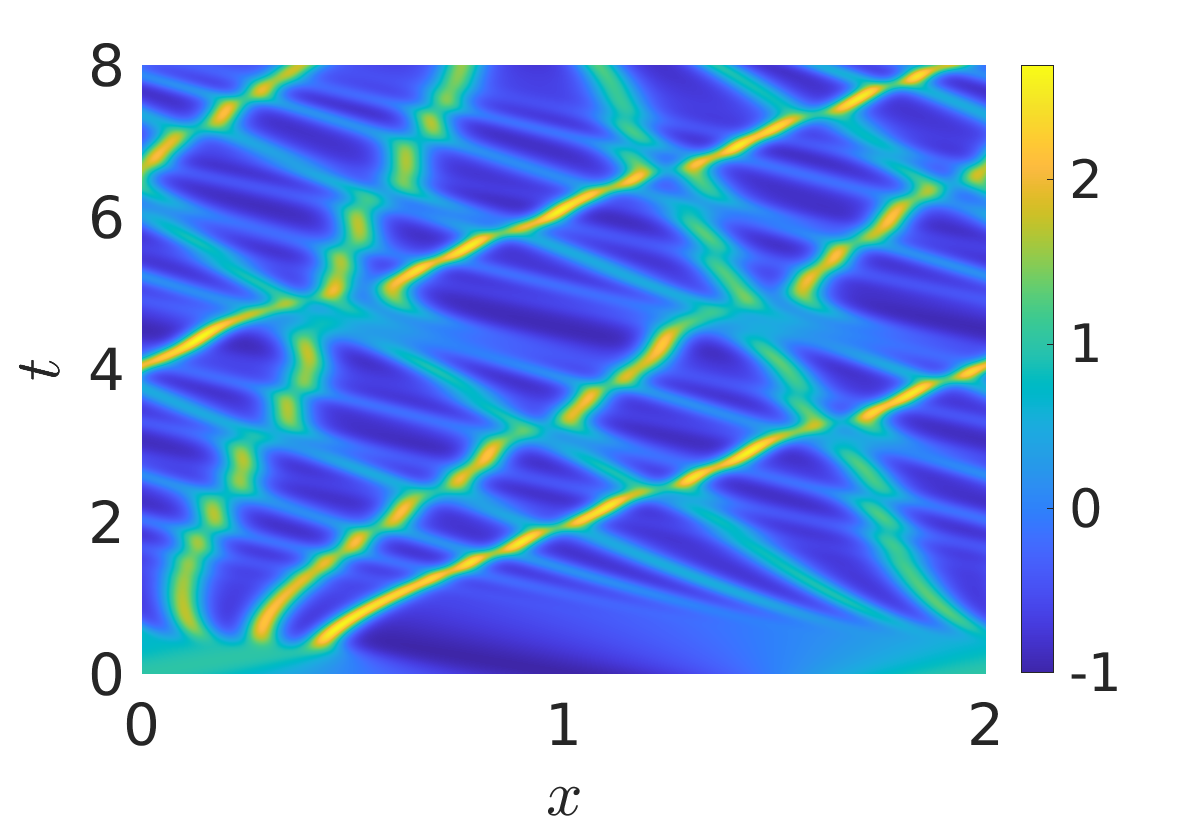} 
	\includegraphics[width=0.32\linewidth]{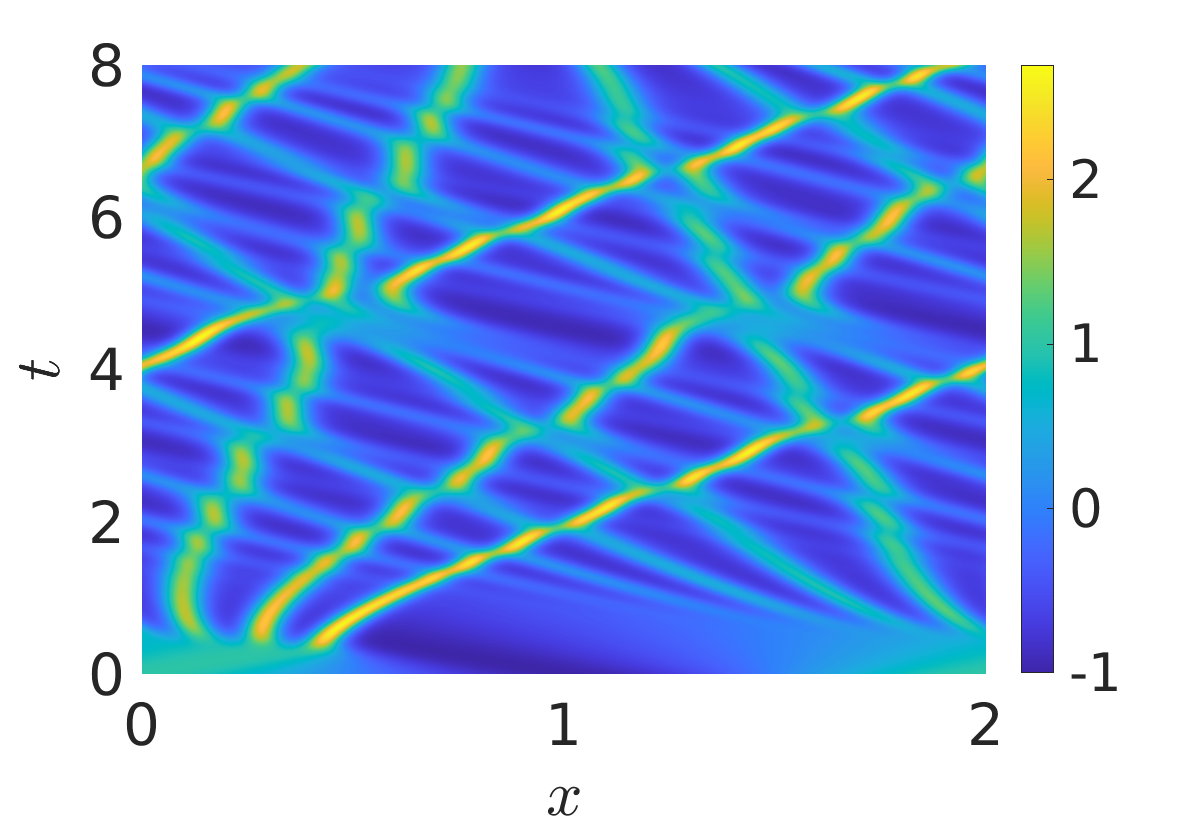} 
	\caption{KdV equation: the FOM solution and LIGEP-ROM solution with order $r = 120$ are shown in the left and right, respectively.}
	\label{fig:KdV}
\end{figure}

Similar to the analysis done in Subsection~\ref{subsec:wave}, we test both ROMs and the FOM in terms of preservation of the discrete polarized energies \eqref{eqn:KdV-rom-pol-ener} and \eqref{eqn:KdV-fom-pol-ener}, respectively. Figure~\ref{fig:KdV-pol-energy} indicates that small drifts over the global energy accuracy yield an unstable POD-Galerkin model regardless of the reduced order. On the other hand, the global energy is preserved for both LIGEP-ROMs of order $ r=70 $ and $ r=120 $. Moreover, we demonstrate the accuracy of the global polarized energy preservation \eqref{eqn:pole_err} of the FOM and ROMs in Figure~\ref{fig:KdV-pol-energy-ac}, which shows that the FOM \eqref{eqn:KdV_LIGEP} and the LIGEP-ROM \eqref{eqn:KdV-LIGEP-ROM} preserve the polarized energy with machine precision accuracy.

\begin{figure}[tb]
	\centering
	\begin{subfigure}{0.48\textwidth}
	\includegraphics[width=1\linewidth]{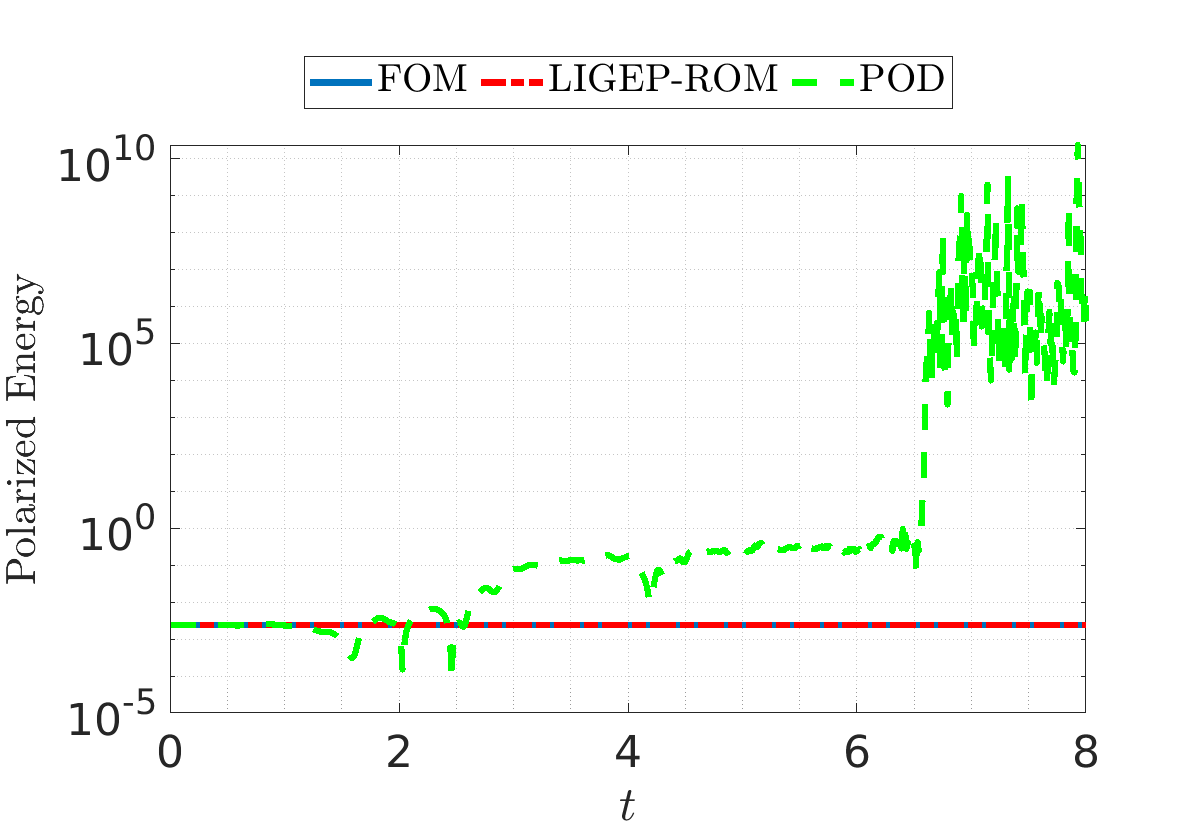}
	\caption{Reduced order $r = 70$.}
	\end{subfigure}
	\begin{subfigure}{0.48\textwidth}
	\includegraphics[width=1\linewidth]{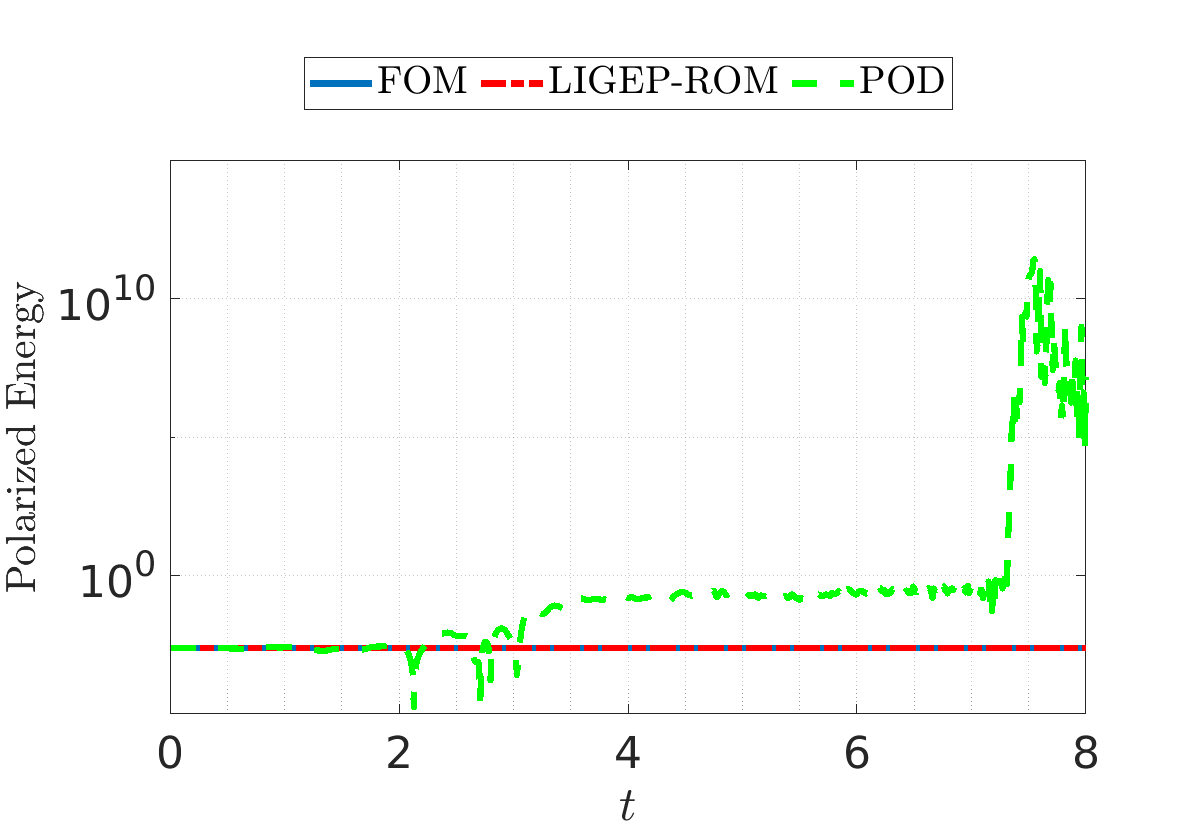}
	\caption{Reduced order $r = 120$.}
\end{subfigure}
	\caption{KdV equation: polarized discrete global energies of the FOM and ROMs obtained using POD-Galerkin and LIGEP-ROM methods.}
	\label{fig:KdV-pol-energy}
\end{figure}

\begin{figure}[tb]
	\centering
	\begin{subfigure}{0.48\textwidth}
		\includegraphics[width=1\linewidth]{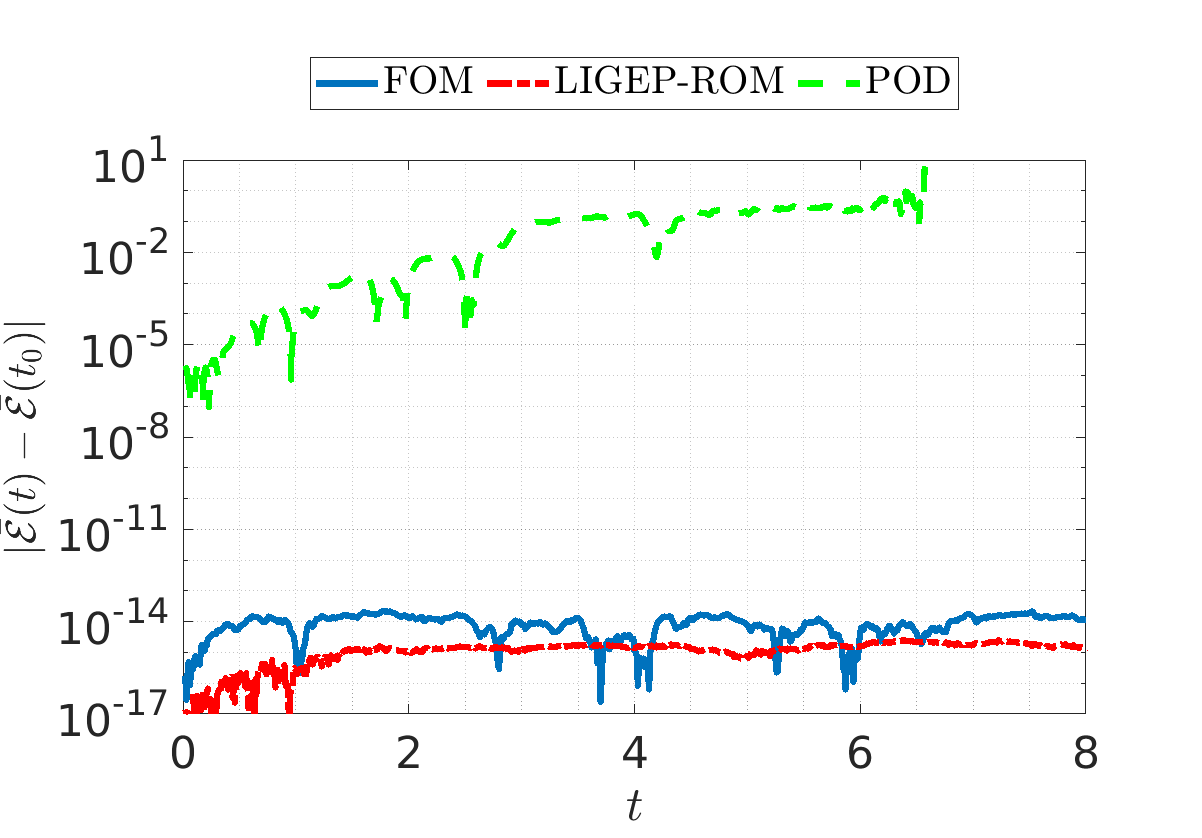}
		\caption{Reduced order $r = 70$.}
	\end{subfigure}
	\begin{subfigure}{0.48\textwidth}
		\includegraphics[width=1\linewidth]{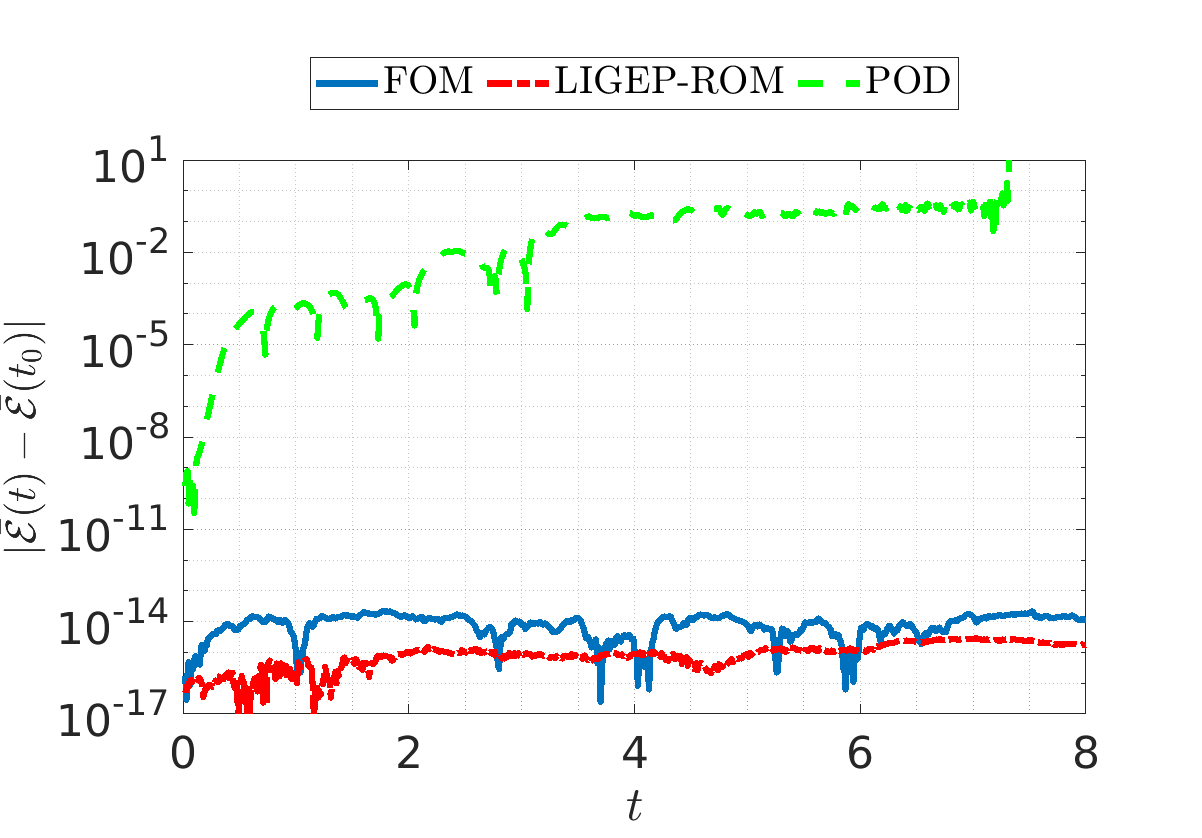}
		\caption{Reduced order $r = 120$.}
	\end{subfigure}
	\caption{KdV equation: polarized discrete global energy errors \eqref{eqn:pole_err} of the FOM and ROMs obtained using POD-Galerkin and LIGEP-ROM methods.}
	\label{fig:KdV-pol-energy-ac}
\end{figure}

For the KdV example, we  lastly examine the error between the approximated polarized energies FOM \eqref{eqn:KdV-fom-pol-ener} and the ROMs \eqref{eqn:KdV-rom-pol-ener} using \eqref{eqn:pole_fr_err}. Figure~\ref{fig:KdV-polfr-energy-ac} shows that by increasing the order of the ROMs from $ r=70 $ to $ r=120 $, the approximated reduced global energy error \eqref{eqn:pole_fr_err} decreases for the LIGEP-ROM \eqref{eqn:KdV-LIGEP-ROM}, where again the order of the classical POD-Galerkin does not significantly affect the error.

\begin{figure}[tb]
	\centering
	\begin{subfigure}{0.48\textwidth}
	\includegraphics[width=1\linewidth]{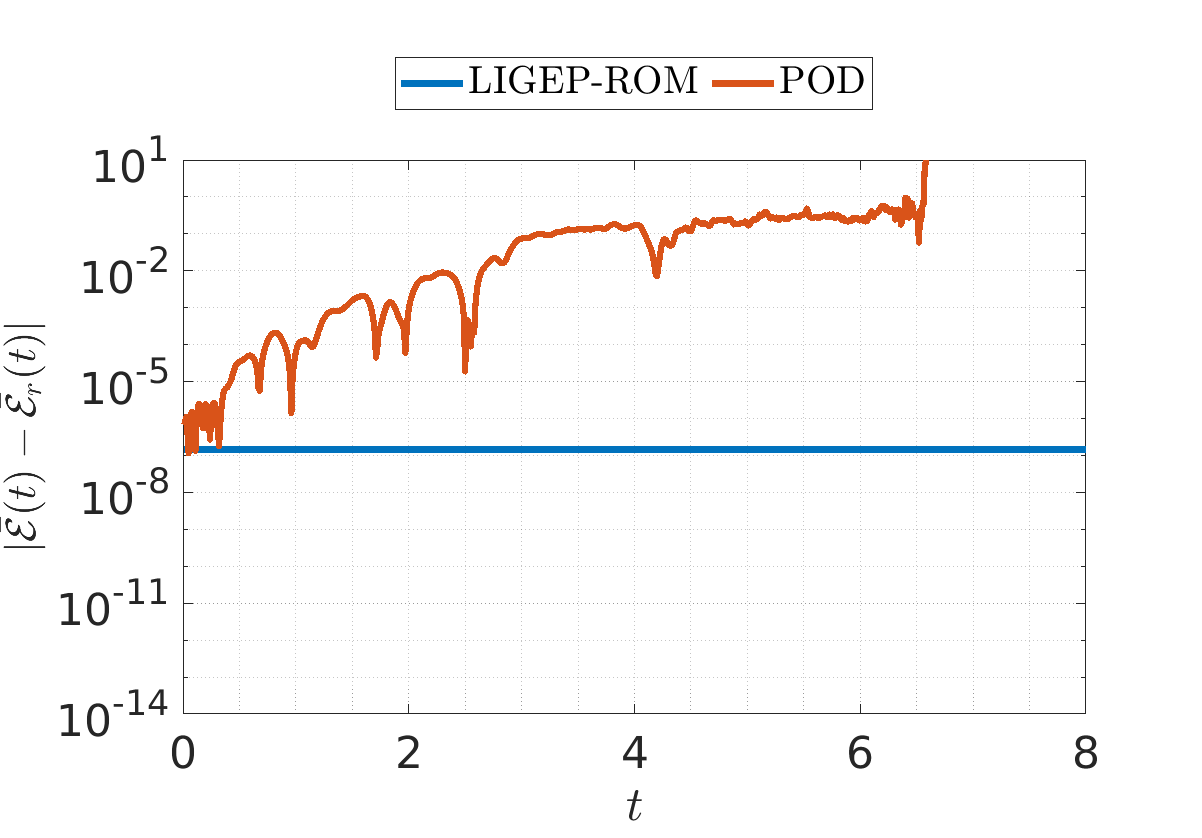}
	\caption{Reduced order $r = 70$.}
\end{subfigure}
\begin{subfigure}{0.48\textwidth}
	\includegraphics[width=1\linewidth]{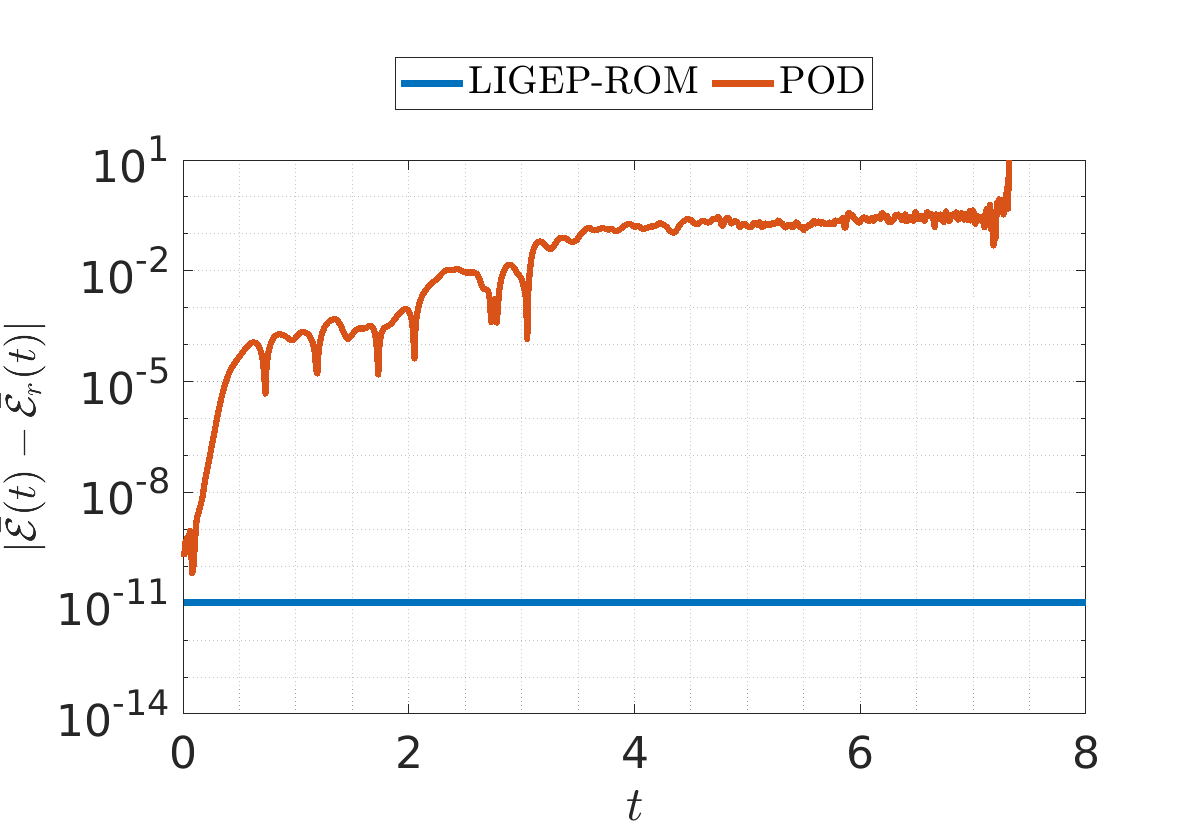}
	\caption{Reduced order $r = 120$.}
\end{subfigure}
	\caption{KdV equation: polarized discrete global energy errors \eqref{eqn:pole_fr_err} between the FOM and ROMs obtained using POD-Galerkin and LIGEP-ROM methods.}
	\label{fig:KdV-polfr-energy-ac}
\end{figure}

\subsection{Camassa-Holm equation}\label{subsec:CH}
In the third test example, we consider the Camassa-Holm (CH) equation
\begin{equation*}
	u_{t} -u_{xxt}+3uu_x-2u_xu_{xx}-uu_{xxx}=0,
\end{equation*}	
which is another model that can be written in the multi-symplectic form \eqref{eqn:Ms_PDEs}. Introducing some new variables, the CH equation can be written as a multi-symplectic Hamiltonian system:
\begin{equation}\label{eqn:CH-fom-elem}
	\begin{aligned}
		\frac{1}{2}\phi_t-\frac{1}{2}\nu_t-v_x&=-w-\frac{3}{2}u^2-\frac{1}{2}\nu^2,\\
		-\frac{1}{2}u_t+w_x&=0,\\
		\phi_x&=u,\\
		-u_x&=-\nu,\\
		-\dfrac{1}{2}u_t&=u\nu-v,\\
	\end{aligned}
\end{equation}
which has the form of \eqref{eqn:Ms_PDEs} with
\begin{equation*}
	z=
	\begin{bmatrix}
		u    \\
		\phi \\
		w    \\
		v    \\
		\nu
	\end{bmatrix},\quad	
	K=
	\begin{bmatrix}
		0       &\frac{1}{2} & 0 & 0 &-\frac{1}{2}\\
		-\frac{1}{2}       & 0 & 0 & 0 &0 \\
		0      & 0 & 0 & 0 &0\\
		0      & 0 & 0 & 0  &0\\
		\frac{1}{2}     & 0 & 0 & 0  &0\\
	\end{bmatrix},\quad
	L=
	\begin{bmatrix}
		0   & 0  & 0 &-1 &0  \\
		0   & 0  & 1 & 0 &0 \\
		0   &-1  & 0 & 0 &0 \\
		1   & 0  & 0 & 0 &0\\
		0   & 0  & 0 & 0 &0\\
	\end{bmatrix},
\end{equation*}
and $S(z)=-wu-\frac{1}{2}u^3-\frac{1}{2}u\nu^2+\nu v$.
A fully discrete multi-symplectic CH equation with the LIGEP \eqref{eqn:LIGEP-FOM} method can be written as:
\begin{equation*}
	\begin{split}
		\frac{1}{2}\delta_t \phi_j^n-\frac{1}{2}\delta_t \nu_j^n-\mu_{t}\delta_x^{1/2}v_j^n&=-\mu_{t}w_j^n-\frac{3}{2}u_j^nu_j^{n+1}-\frac{1}{2}\nu_j^n\nu_j^{n+1},\\
		-\frac{1}{2}\delta_t u_j^n+\mu_{t}\delta_x^{1/2}w_j^n&=0,\\
		\mu_{t}\delta_x^{1/2}\phi_j^n&=\mu_t u_j^n,\\
		-\mu_{t}\delta_x^{1/2}u_j^n&=-\mu_t \nu_j^n,\\
		-\dfrac{1}{2}\delta_t u_j^n&=\frac{1}{2}(u_j^{n+1} \nu_j^n+u_j^n\nu_j^{n+1})-\mu_t v_j^n.\\
	\end{split}
\end{equation*}
Again, after eliminating the additional variables, we obtain the following full-order CH equation of the form:
\begin{equation}\label{eqn:CH-LIGEP-FOM}
	\begin{aligned}
	&	\delta_tu_j^n-\delta_t(\delta_x^{1/2})^2u^n_j-\frac{1}{2}(\delta_x^{1/2})^2((\delta_x^{1/2}u^n_j)u^{n+1}_j)-\frac{1}{2}(\delta_x^{1/2})^2(u^n_j(\delta_x^{1/2}u^{n+1}_j))\\
	&	\hspace{6cm} +\frac{3}{2}\delta_x^{1/2}u^n_ju^{n+1}_j+\frac{1}{2}\delta_x^{1/2}(\delta_x^{1/2}u^n_j)(\delta_x^{1/2}u^{n+1}_j)=0.
	\end{aligned}
\end{equation}
The polarised discrete energy preserved by the fully-discrete model \eqref{eqn:CH-LIGEP-FOM} is
\begin{equation}\label{eqn:CH-fom-pol-ener}
	\begin{split}
		\bar{\mathcal{E}}(t_n) = & \, \frac{\Delta x}{6}\sum_{j=1}^{N}\Big( -3(u_j^n)^2 u_j^{n+1}-(\delta_x^{1/2}u^n_j)^2u_j^{n+1} -2(\delta_x^{1/2}u^n_j)(\delta_x^{1/2}u^{n+1}_j)u^n_j\Big).
	\end{split}
\end{equation}
We construct the POD-Galerkin model by considering the spatial discretization of the CH equations
$$ (1-\partial_{xx})u_{t}=-3uu_x+2u_xu_{xx}+uu_{xxx}=0.$$
The POD basis matrix $W\in  \mathbb{R}^{N\times r} $ for the POD-Galerkin model is obtained from the following snapshot matrix 
$$\bl S=\left[\bl u(t_1),\ldots,\bl u(t_{N_t})\right]\in \mathbf{R}^{ N\times N_t} ,$$
where the snapshots are generated using the LIGEP model \eqref{eqn:CH-LIGEP-FOM}.

A semi-discrete POD-Galerkin model is constructed as follows:
$$ (I_r-W^\top D_{xx}W)\blt u_t =  W^\top \left(-3 (W \blt u) \circ ( D_x W \blt u)+2({D}_x W \blt u) \circ ( D_{xx} W \blt u)+( W \blt u) \circ ( D_{xxx} W \blt u)\right), $$
where $  D_{x},  D_{xx}$ and $  D_{xxx} $ are obtained as discussed in Subsections~\ref{subsec:wave}--\ref{subsec:KdV} . The fully-discrete equations for the classical POD-Galerkin model is obtained by employing Kahan's method \eqref{eqn:Kahan} to the semi-discrete POD-Galerkin model.

To obtain the basis for the LIGEP-ROM, we consider the following snapshot matrix:
$$\bl Z=\left[\bl u(t_1),\ldots,\bl u(t_{N_t}), \boldsymbol{\phi}(t_1),\ldots,\boldsymbol \phi(t_{N_t}),\bl v(t_1),\ldots,\bl v(t_{N_t}),\bl w(t_1),\ldots,\bl w(t_{N_t}),\boldsymbol \nu(t_1),\ldots,\boldsymbol \nu(t_{N_t})\right]\in \mathbb{R}^{N\times 5 N_t}.$$

Here, we approximate the auxiliary variable $ \boldsymbol{\nu}  $ using the equation $ u_x=\nu $. The variable $ \boldsymbol{\phi} $ is approximated as in the previous example using a trapezoid rule by utilizing the equation $ u=\phi_x $. Similarly, we approximate the variable $ u_t $ by a second-order central difference, followed by employing a trapezoid rule to equation $-\frac{1}{2}u_t+w_x=0 $ to obtain the discrete variable $ \bl w. $ Lastly, we approximate the variable $ \bl v $ by using the equation $ v=u\nu +w_x $.

Consequently, the LIGEP-ROM for the CH equation reads as follows:
\begin{equation}\label{eqn:CH-LIGEP-ROM}
	\begin{split}
		\delta_t \blt u^n-\delta_t(\tilde D_x)^2\blt u^n-\frac{1}{2}(\tilde D_x)^2 V^\top\left(( V \tilde D_x \blt u^n)\circ \blh u^{n+1}\right)-\frac{1}{2}(\tilde D_x)^2 V^\top\left(\blh u^n\circ ( V \tilde D_x \blt u^{n+1})\right)\\
		+\frac{3}{2}\tilde D_x V^{T}\left( \blh u^n \circ \blh u^{n+1}\right) +\frac{1}{2}\tilde D_x V^{T}\left( V \tilde D_x \blt u^n\right)\circ\left( V \tilde D_x \blt u^{n+1}\right)=0,
	\end{split}
\end{equation}
where $ \blh u^n $ is the approximation of the state $ \bl u $.
The polarised discrete energy preserved by the global energy preserving ROM \eqref{eqn:CH-LIGEP-ROM} is
\begin{equation}\label{eqn:CH-rom-pol-ener}
	\begin{split}
		\bar{\mathcal{E}}_r(t_n) = & \, \frac{\Delta x}{6}\sum_{j=1}^{N}\Big( -3(\blh u_j^n)^2 \blh u_j^{n+1}-( V \tilde D_x \blt u^n)_j^2 \blh u_j^{n+1} -2( V \tilde D_x \blt u^n)_j( V \tilde D_x \blt u^{n+1})_j \blh u^n_j\Big).
	\end{split}
\end{equation}
Here, we consider the motion of a periodic peaked traveling wave \cite{cai2022} with an initial condition
\begin{equation*}
	u(x,0)=
	\begin{cases}
		\frac{c}{\cosh(a/2)}\cosh(x-x_0), \qquad \qquad \  |x-x_0|\le a/2,\\
		\frac{c}{\cosh(a/2)}\cosh(a-(x-x_0)), \qquad |x-x_0|> a/2,\\
	\end{cases}
\end{equation*}
and a periodic boundary condition. We compute the solution with $ x_0 = 0 $, $  c = 1 $ and $  a = 30 $ on the domain $ [0,a] $. We set the spatial $ \Delta x=0.03 $, hence, the full dimension $N = 1000$, and $ \Delta t=0.005 $. The full-order CH model is simulated up to final time $ T=6 $ for obtaining the basis of both ROMs.

We first test the ROMs in terms of accuracy in Figure~\ref{fig:CH-l2} for orders $ r=70 $ and $ r=120 $. It indicates that the relative state error \eqref{eqn:state-l2err} for  the POD-Galerkin model exhibits an unstable behavior after $ T=6 $. Furthermore, we show the solutions of the models in Figure~\ref{fig:CH}, where, on the contrary to the POD-Galerkin model, the LIGEP-ROM \eqref{eqn:CH-LIGEP-ROM} reflects the FOM \eqref{eqn:CH-LIGEP-FOM} dynamics accurately for order $ r=120 $.

\begin{figure}[tb]
	\centering
	\includegraphics[width=0.48\linewidth]{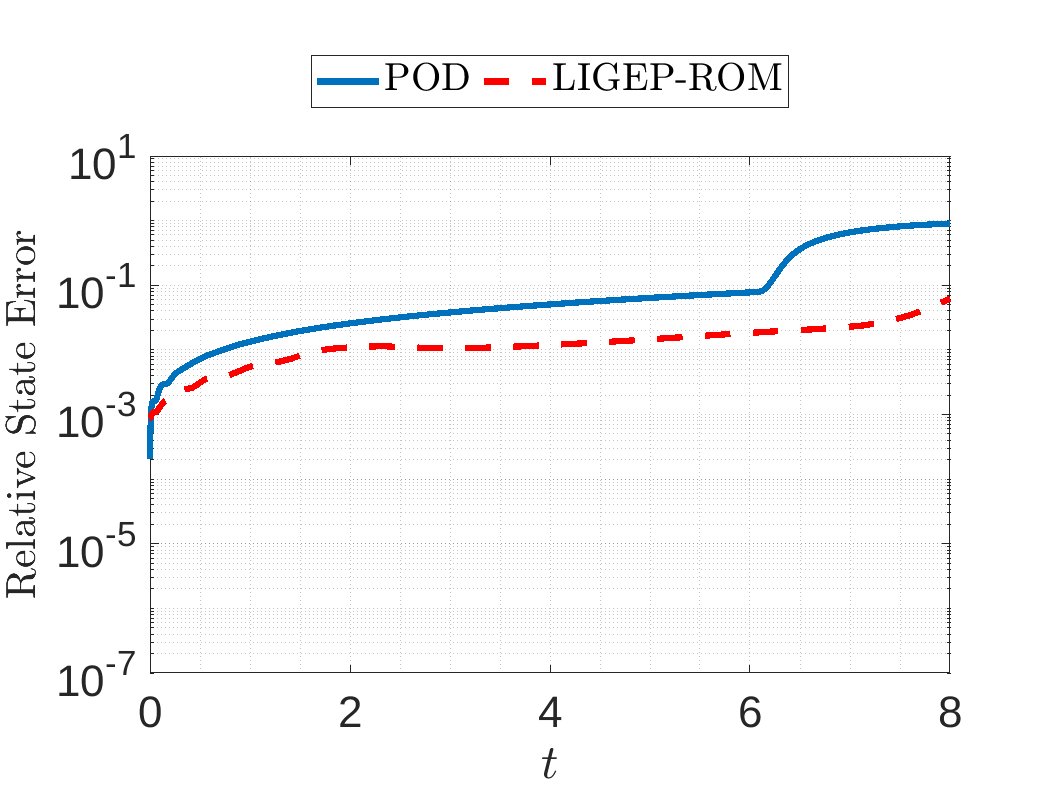}
	\includegraphics[width=0.48\linewidth]{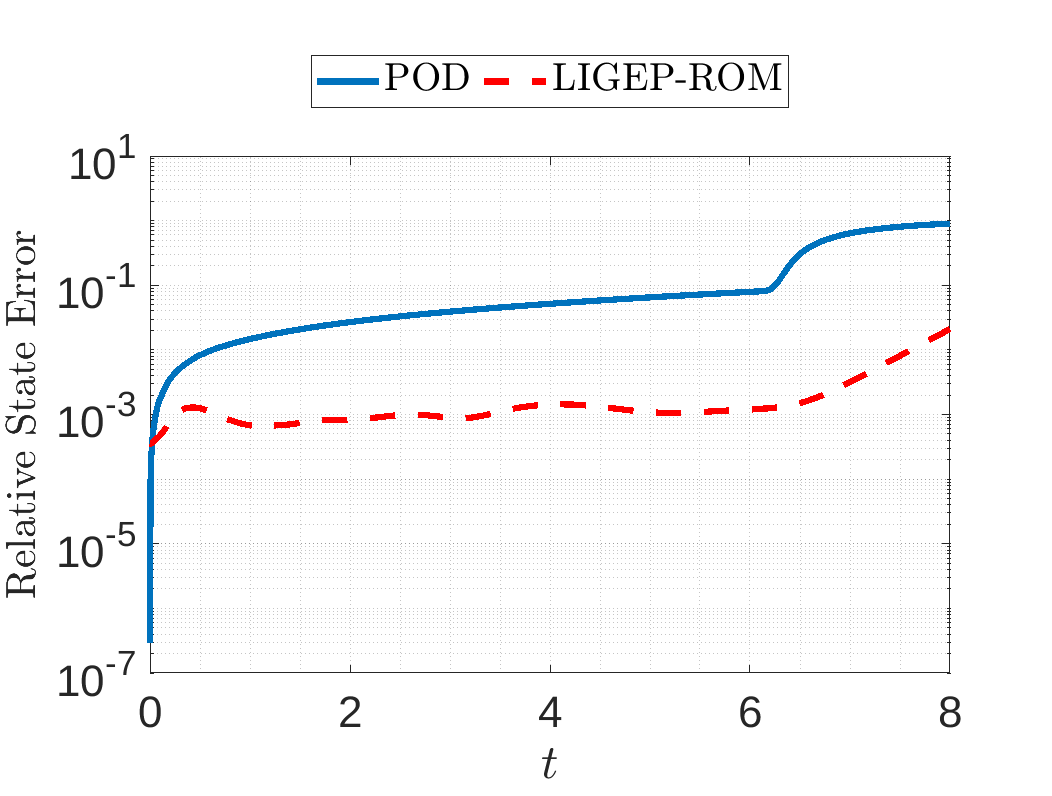}
	\caption{Camassa-Holm equation: relative state errors between the FOM and ROMs of orders $r = 70$ and $120$ are shown in the left and right, respectively.}
	\label{fig:CH-l2}
\end{figure}

\begin{figure}[tb]
	\centering
	\includegraphics[width=0.32\linewidth]{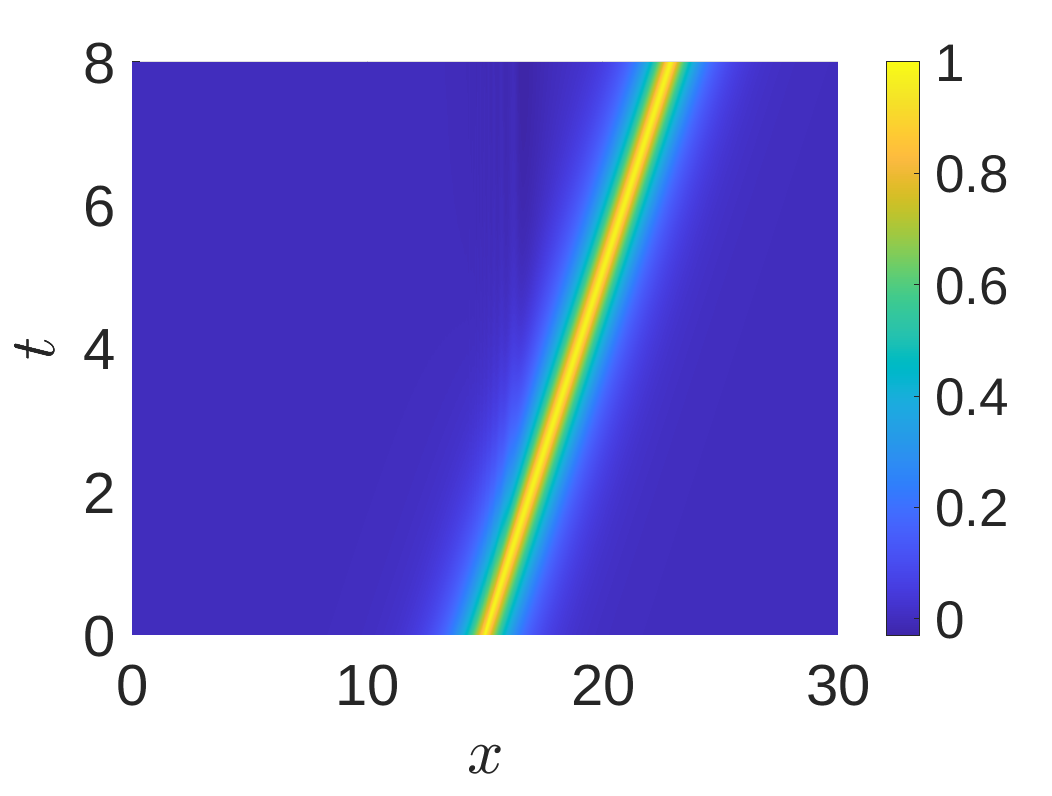} 
	\includegraphics[width=0.32\linewidth]{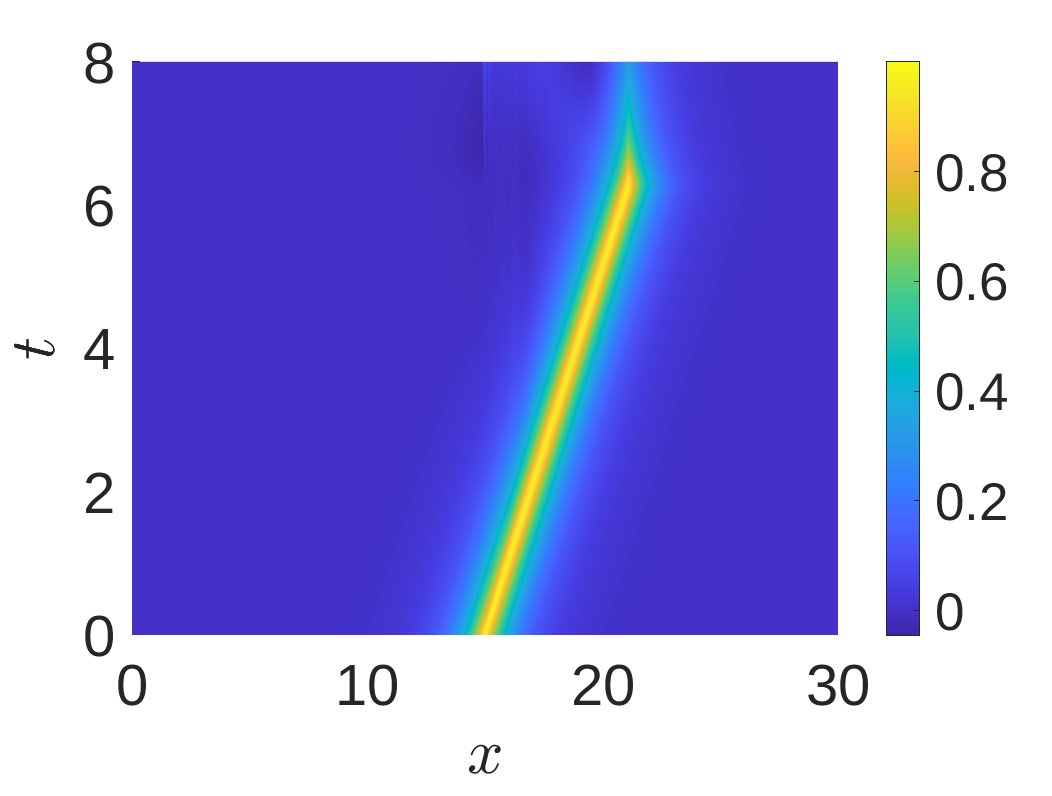} 
	\includegraphics[width=0.32\linewidth]{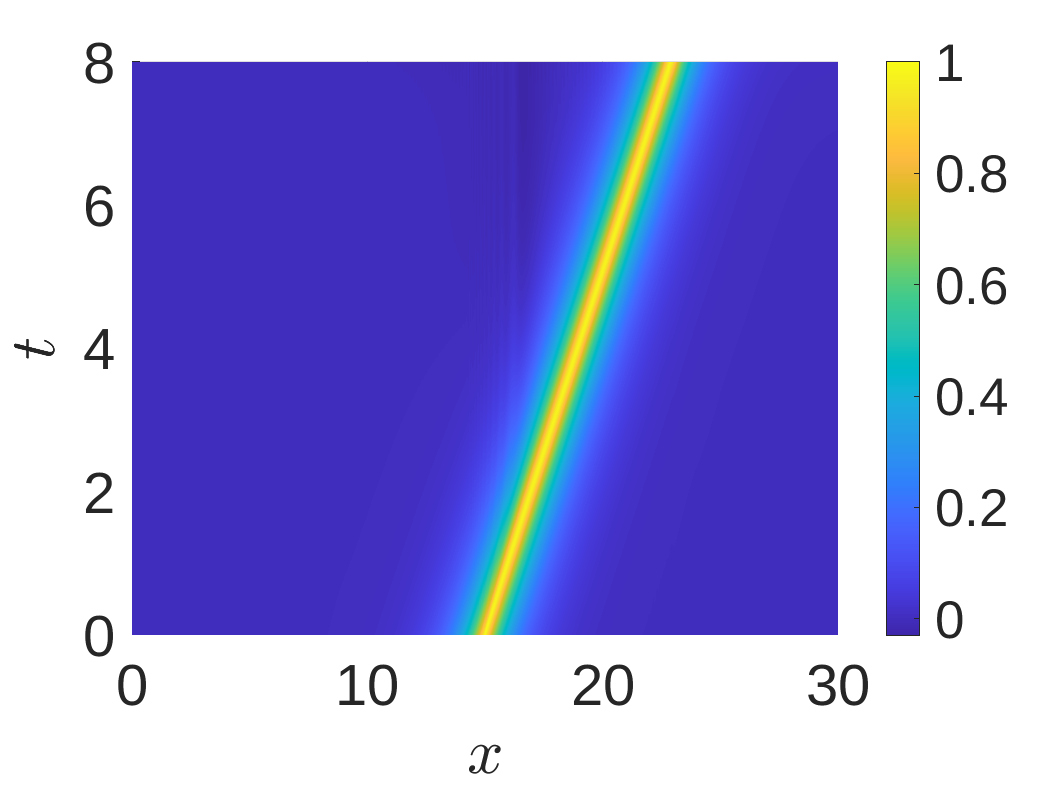} 
	\caption{Camassa-Holm equation:  trajectories obtained using the FOM and ROMs are shown. The order of ROMs is set to $ r=120 $. The FOM, POD-Galerkin ROM, and LIGEP-ROM are shown in the left, middle, and right figure, respectively.}
	\label{fig:CH}
\end{figure}

In Figure~\ref{fig:CH-pol-energy}, we demonstrate the polarized energy \eqref{eqn:CH-rom-pol-ener} preserved by FOM and LIGEP-ROMs for orders $ r=70 $ and $ r=120 $, whereas a dissipative behavior for the POD-Galerkin model is noticed at least outside the training regime. We have observed that dissipation in energy renders the POD-Galerkin model inaccurate. 
Furthermore, we demonstrate the energy preservation accuracy \eqref{eqn:pole_err} of the FOM \eqref{eqn:CH-LIGEP-FOM} and ROMs of order $r=70 $ and $ r=120 $ in Figure~\ref{fig:CH-pol-energy-ac}, where the FOM \eqref{eqn:CH-LIGEP-FOM} and the LIGEP-ROMs \eqref{eqn:CH-LIGEP-ROM} preserve the energy with machine precision accuracy, similar to our two previous examples. Lastly, we show the energy error \eqref{eqn:pole_fr_err} between the FOM \eqref{eqn:CH-LIGEP-FOM} and ROMs of order $r=70 $ and $ r=120 $ in Figure~\ref{fig:CH-pol-energy-fr}, which shows that the approximated energy \eqref{eqn:CH-rom-pol-ener} is converging slowly to the true energy \eqref{eqn:CH-fom-pol-ener}.

\begin{figure}[tb]
	\centering
	\begin{subfigure}{0.48\textwidth}
	\includegraphics[width=1\linewidth]{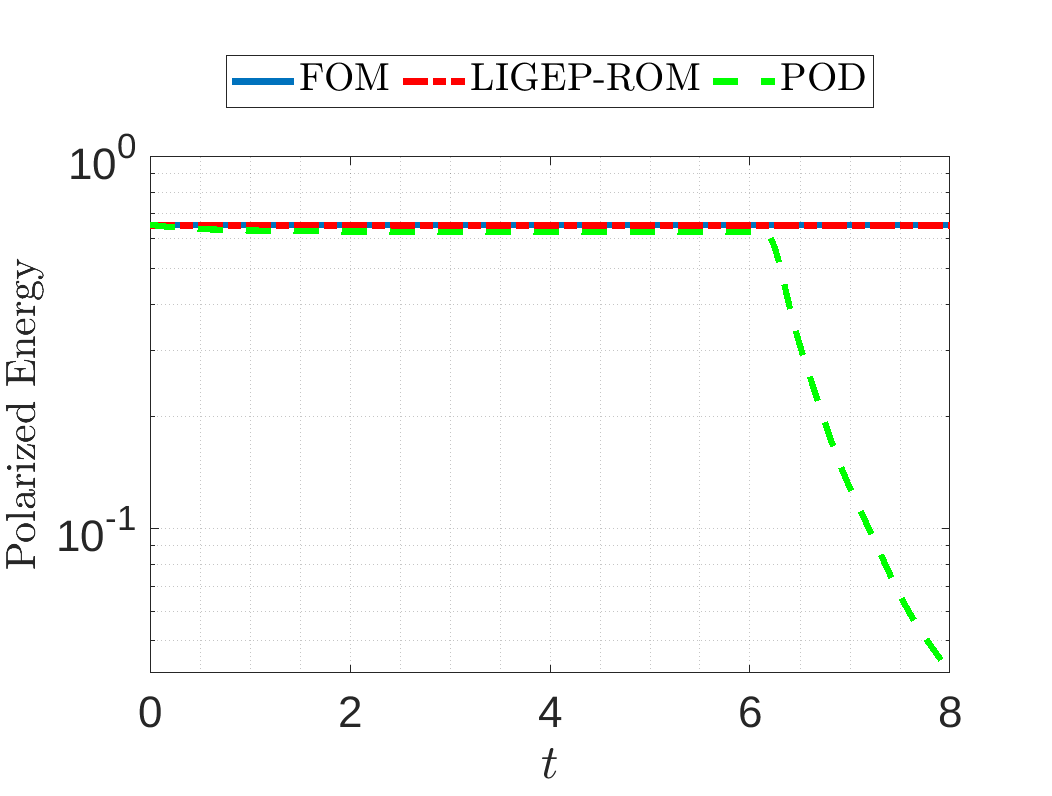}
	\caption{Reduced order $r = 70$.}
	\end{subfigure}
	\begin{subfigure}{0.48\textwidth}
	\includegraphics[width=1\linewidth]{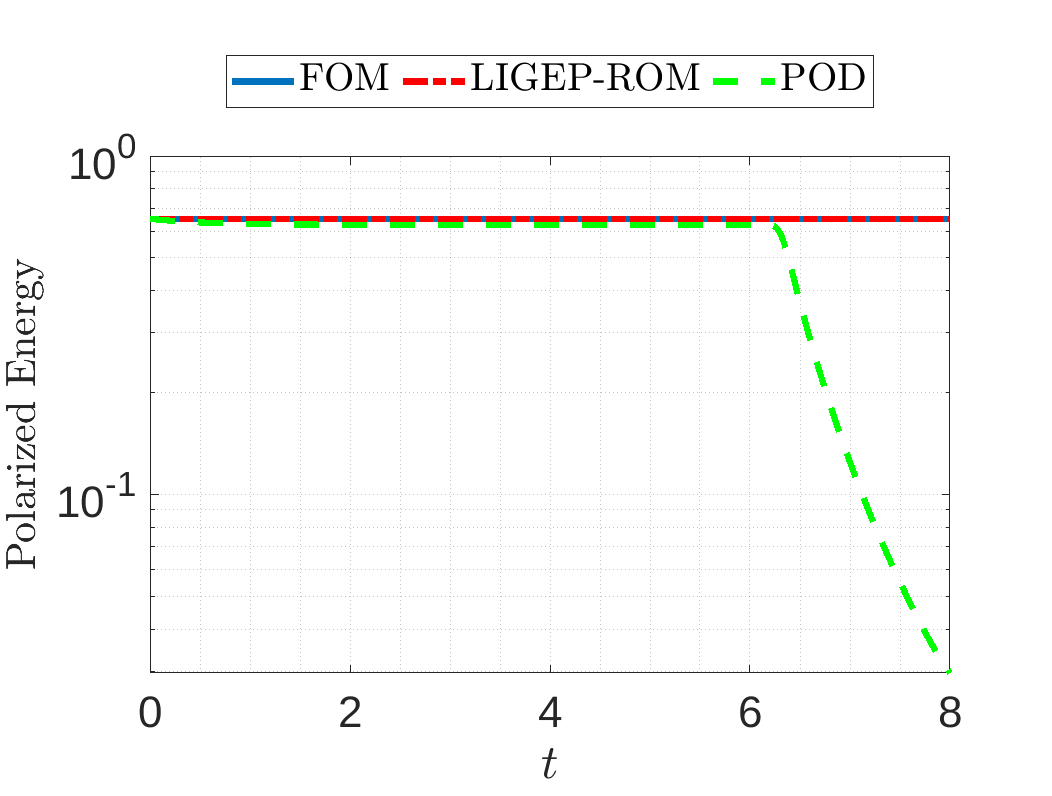}
	\caption{Reduced order $r = 120$.}
\end{subfigure}
	\caption{Camassa-Holm equation: polarized discrete global energies of the FOM and ROMs are shown.}
	\label{fig:CH-pol-energy}
\end{figure}

\begin{figure}[tb]
	\centering
	\begin{subfigure}{0.48\textwidth}
	\includegraphics[width=1\linewidth]{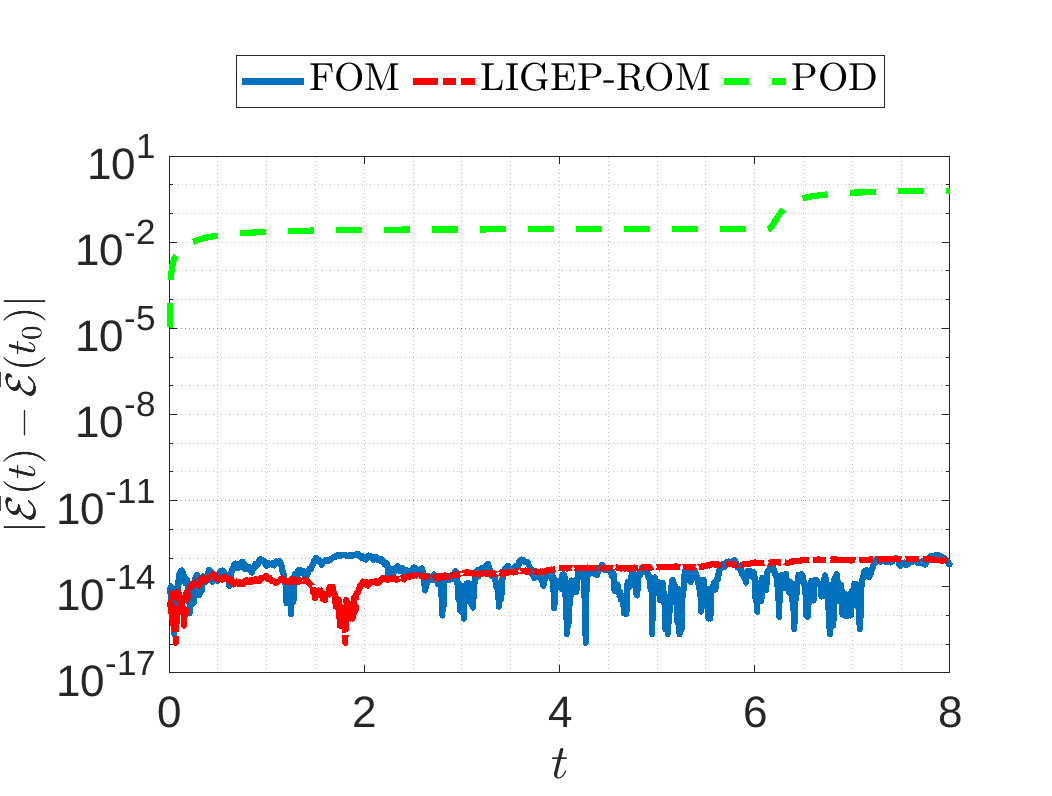}
	\caption{Reduced order $r = 70$.}
\end{subfigure}
\begin{subfigure}{0.48\textwidth}
	\includegraphics[width=1\linewidth]{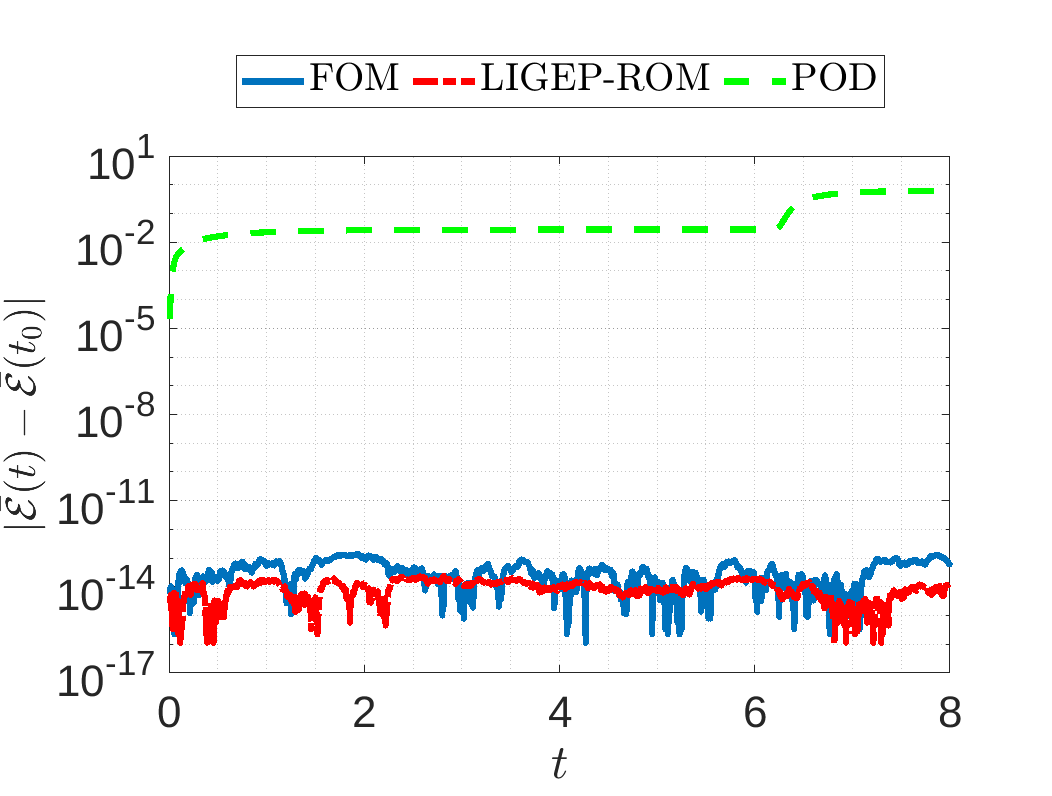}
	\caption{Reduced order $r = 120$.}
\end{subfigure}
	\caption{Camassa-Holm equation:  polarized discrete global energy errors \eqref{eqn:pole_err} of the FOM and ROMs obtained using the POD-Galerkin and LIGEP-ROM methods.}
	\label{fig:CH-pol-energy-ac}
\end{figure}

\begin{figure}[tb]
	\centering
	\begin{subfigure}{0.48\textwidth}
	\includegraphics[width=1\linewidth]{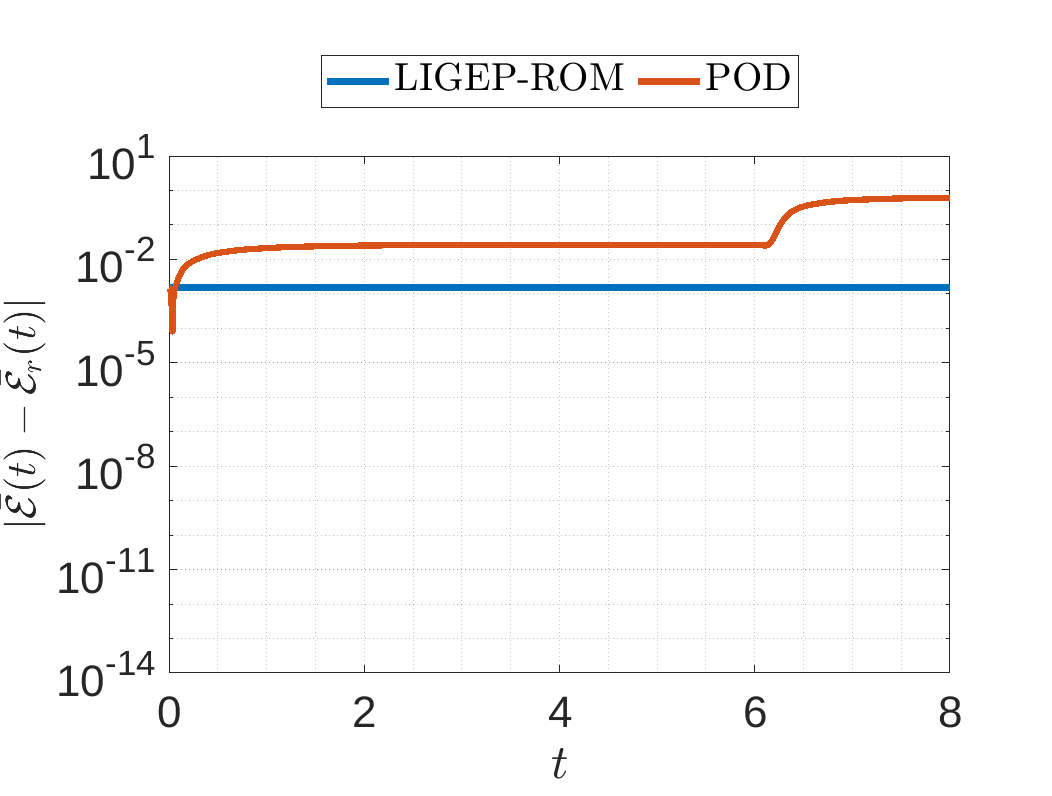}
	\caption{Reduced order $r = 70$.}
\end{subfigure}
\begin{subfigure}{0.48\textwidth}
	\includegraphics[width=1\linewidth]{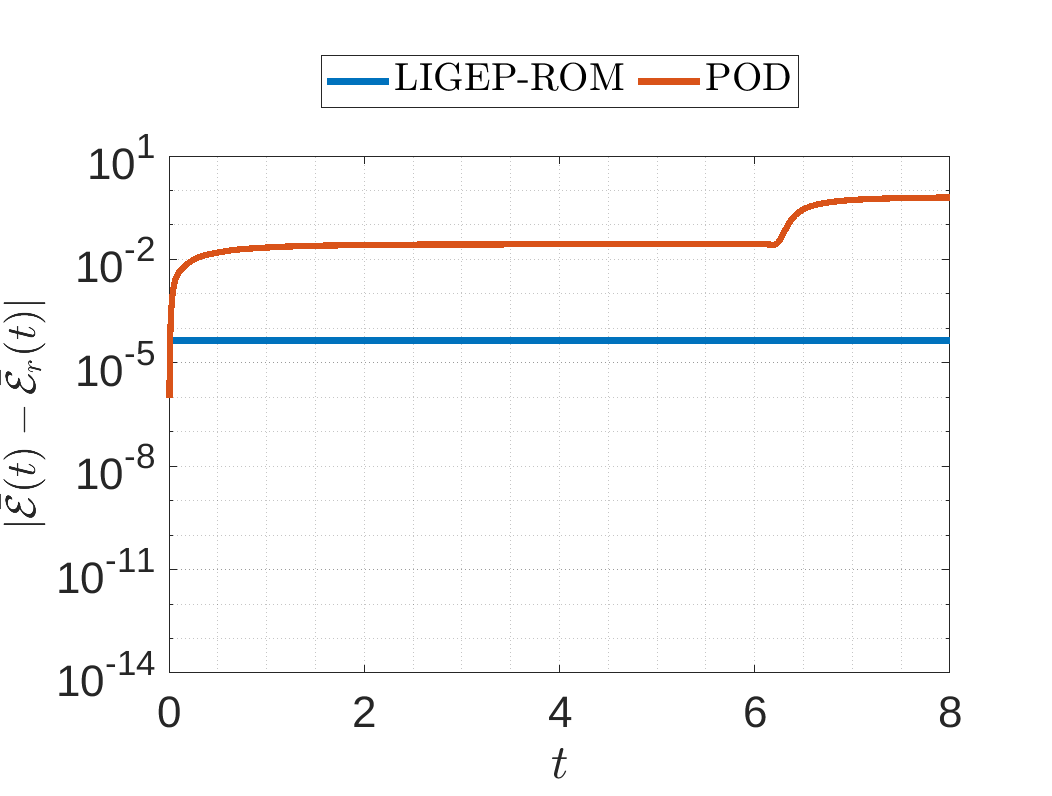}
	\caption{Reduced order $r = 120$.}
\end{subfigure}

	\caption{Camassa-Holm equation:  polarized discrete global energy errors \eqref{eqn:pole_fr_err} between the FOM and ROMs obtained using the POD-Galerkin and LIGEP-ROM methods.}
	\label{fig:CH-pol-energy-fr}
\end{figure}

 \section{Conclusions}\label{sec:conc}

 In this paper, we have proposed the construction of linearly implicit global energy preserving reduced-order models (LIGEP-ROMs) for cubic-Hamiltonian systems, which exploit the multi-symplectic structure of PDEs. 
We have proven that the LIGEP-ROMs preserve global energy, which is also illustrated in our numerical examples. 
Furthermore, we have shown that the spatially-discrete equations of the LIGEP-ROMs satisfy a the spatially-discrete local energy conversation law. We have demonstrated the efficiency of the proposed approach with several numerical examples and have compared it with the classical POD-Galerkin model. This illustrates that energy-preserving reduced-order models are robust and suitable for long-time integration and for predictions outside the training data. 

In our future work, we would like to investigate how to construct global energy preserving reduced-order models directly from data. Moreover, in our examples, we have observed a slow decay of singular values of the snapshot matrix. Therefore, it would be interesting to explore the possibility of using non-projection methods such as autoencoders to further reduce the intrinsic dimensional of a reduced-order model. 

\section*{Acknowledgment}

\section*{Funding Statement}
This work supported by the German Research Foundation (DFG) Research Training Group 2297 ``MathCoRe'', Magdeburg. 

\section*{Data Availability}

The data that support the findings of this study are available from the corresponding author upon reasonable request.

\addcontentsline{toc}{section}{References}
\bibliographystyle{ieeetr}
\bibliography{ref}

\end{document}